\renewenvironment{proof}{\noindent{\bf Proof.}}{~~$\Box$}
\theoremstyle{plain}
\newtheorem{thrm}{Theorem}[section]
\theoremstyle{definition}
\newtheorem{dfnt}[thrm]{Definition}%[section]
\newtheorem{remk}[thrm]{Remark}
\newtheorem{exa}[thrm]{Example}
\newtheorem{obs}{Observation}
\newcommand{\Z}{{\mathbb{Z}}}
\begin{document}
%\begin{linenumbers}
\journal{}
\begin{frontmatter}
\title{{\bf\large Ordinal sums of triangular norms on a bounded lattice}}
\author[1]{Yao Ouyang}
\ead{oyy@zjhu.edu.cn}
\author[2]{Hua-Peng Zhang\corref{cor2}}
\ead{huapengzhang@163.com}
\author[3]{Bernard De Baets}
\ead{Bernard.DeBaets@Ugent.be}
%\cortext[cor1]{Corresponding author. Tel.: +86-572-2321527}
\cortext[cor2]{Corresponding author.}
\address[1]{Faculty of Science, Huzhou University,
            Huzhou, Zhejiang 313000, China}
\address[2]{School of Science, Nanjing University of Posts and Telecommunications,
Nanjing 210023, China}
\address[3]{KERMIT, Department of Data Analysis and Mathematical Modelling, Ghent University, Coupure links 653, 9000 Gent, Belgium}
%\date{}
%\maketitle
\begin{abstract}
The ordinal sum construction provides a very effective way to generate a new triangular norm on the real unit interval
from existing ones. One of the most prominent theorems concerning the ordinal sum of triangular norms on the real unit interval
states that a triangular norm is continuous if and only if it is uniquely representable as an ordinal sum of continuous Archimedean triangular norms. However, the ordinal sum of triangular norms on subintervals of a bounded lattice is not always a triangular norm (even if only one summand is involved), if one just extends the ordinal sum construction to a bounded lattice in a na\"{\i}ve way. In the present paper, appropriately dealing with those elements that are incomparable with the endpoints of the given subintervals, we propose an alternative definition of ordinal sum of countably many (finite or countably infinite) triangular norms on subintervals of a complete lattice, where the endpoints of the subintervals constitute a chain. The completeness requirement for the lattice is not needed when considering finitely many triangular norms. The newly proposed ordinal sum is shown to be always a triangular norm. Several illustrative examples are given.
\end{abstract}

\begin{keyword}
Lattice; Triangular norm; Ordinal sum; Partially ordered monoid
\end{keyword}
\end{frontmatter}

\section{Introduction}
The ordinal sum construction provides a method to construct a new semigroup from existing ones~\cite{Cli54}.
Ling~\cite{Lin65} and Schweizer and Sklar~\cite{SchSkl63} applied this method to a special kind of semigroup, namely
to triangular norms (t-norms, for short) on the real unit interval~$[0, 1]$. One of the most prominent theorems concerning
the ordinal sum of t-norms states that a t-norm is continuous if and only if it is uniquely representable as an ordinal sum
of continuous Archimedean t-norms (see, e.g.,~\cite{AlsFraSch06,KleMesPap00}).

T-norms on more general structures (e.g., posets~\cite{CooKer94,Zha05} and bounded lattices~\cite{BaeMes99,Dro99}) have been proposed and extensively investigated. In 2006, Saminger~\cite{Sam06} extended the ordinal sum of t-norms on the real unit interval~$[0, 1]$ to the ordinal sum of t-norms on subintervals of a bounded lattice  in a rather direct way without much consideration for the characteristics of a lattice, especially the existence of elements that are incomparable with the endpoints of the given subintervals. Unfortunately, Saminger's ordinal sum of t-norms on subintervals of a bounded lattice does not always yield a t-norm even in the case of a single summand. Some researchers~\cite{Med12,SamKleMes08} characterized when Saminger's ordinal sum of t-norms always leads to a t-norm, while other researchers attempted to modify Saminger's ordinal sum or considered the ordinal sum problem for a particular class of lattices. For instance, Ertu\u{g}rul et al.~\cite{ErtKarMes15} modified Saminger's ordinal sum for one special summand to make sure it results in a t-norm. El-Zekey~\cite{Elz19} studied the ordinal sum of t-norms on bounded lattices that can be written as a lattice-based sum of lattices. Up to now, the ordinal sum problem has not yet been solved completely.

Although Saminger's definition of ordinal sum of t-norms on a bounded lattice is a natural extension of the ordinal sum of t-norms on the real unit interval~$[0, 1]$, it is not satisfactory since it does not always lead to a t-norm. This motivates the following question:

{\it Does there exist a more appropriate definition of ordinal sum of t-norms on a bounded lattice}?

We argue that any definition of ordinal sum of t-norms on a bounded lattice that reduces to the ordinal sum of t-norms on $[0, 1]$ could be a possible candidate for the answer to the above question. The key lies in whether it always leads to a t-norm. In this paper, appropriately dealing with those elements that are incomparable with the endpoints of the given subintervals, by synthesizing the techniques of~\cite{BodKal14} and~\cite{ErtKarMes15}, we propose an alternative definition of ordinal sum of countably many t-norms on subintervals of a complete lattice, where the endpoints of the subintervals constitute a chain. The completeness requirement for the lattice is not needed when considering finitely many t-norms. Our proposed ordinal sum is shown to be always a t-norm.

Admittedly, a t-norm on a bounded lattice is nothing else but a commutative and integral partially ordered monoid (pomonoid, for short)~\cite{Hohle95} and the ordinal sum of pomonoids has been investigated in the literature~\cite{Fuchs63}. However, the ordinal sum of pomonoids is different from the above-mentioned ordinal sum of t-norms since the former is defined on the direct sum of the underlying posets, while the latter is defined on a bounded lattice that is not necessarily the direct sum of the underlying subintervals of the lattice. The poset product of pomonoids proposed in~\cite{JipMon09} generalizes both the ordinal sum of pomonoids and the direct product of pomonoids. Concretely speaking, the poset product of pomonoids reduces to the ordinal sum of pomonoids when the underlying index set is a chain, while it reduces to the direct product of pomonoids when the underlying index set is an antichain. Therefore, the poset product of pomonoids cannot cover our ordinal sum of t-norms due to the fact that in our ordinal sum of t-norms the underlying index set is a chain and the above-mentioned difference between the ordinal sum of pomonoids and the ordinal sum of t-norms.

The remainder of this paper is organized as follows. We recall some basic notions and results related to lattices and t-norms on a bounded lattice, and briefly review the progress in the study of ordinal sums of t-norms on a bounded lattice in Section~\ref{Sec-2}. Section~\ref{Sec-3} is devoted to proposing an alternative definition of ordinal sum of t-norms on a bounded lattice and proving it to be a t-norm, while Section~\ref{Sec-4} shows some examples fitting in the newly proposed ordinal sum of t-norms on a bounded lattice. We end with some conclusions and future work in Section~\ref{Sec-5}.

\section{Preliminaries} \label{Sec-2}

In this section, we recall some basic notions and results related to lattices and t-norms on a bounded lattice, and briefly review the progress in the study of  ordinal sums of t-norms on a bounded lattice.

\subsection{T-norms on a bounded lattice}

A \emph{lattice} \cite{Bir73} is a nonempty set $L$ equipped with a partial order $\leq$ such that any two elements $x$ and $y$ have a greatest lower bound
(called meet or infimum), denoted by $x\wedge y$, as well as a smallest upper bound (called join or supremum), denoted by $x\vee y$. For $a, b\in L$, the symbol $a< b$ means that $a\leq b$ and $a\neq b$. If neither $a\leq b$ nor $b\leq a$, then we say that $a$ and $b$ are incomparable. The set of all elements of $L$ that are incomparable with $a$ is denoted by $I_{a}$. A lattice $(L, \leq, \wedge, \vee)$ is called \emph{bounded} if it has a top element and a bottom element, while it is said to be \emph{complete} if for any $A\subset L$, the greatest lower bound $\bigwedge A$ and the smallest upper bound $\bigvee A$ of $A$ exist. Obviously, any finite lattice is necessarily complete and any complete lattice is necessarily bounded.

Let $(L, \leq, \wedge, \vee)$ be a lattice and $a, b\in L$ with $a\leq b$. The subinterval $[a, b]$ of $L$ is defined as
\[
[a, b]=\{x\in L\ |\ a\leq x\leq b\}\,.
\]
Other subintervals such as $[a, b[$ and $]a, b[$ can be defined similarly.
Obviously, $([a, b], \leq, \wedge, \vee)$ is a bounded lattice with top element $b$ and bottom element~$a$.

\begin{dfnt}\cite{BaeMes99, CooKer94}
Let $(L, \leq, \wedge, \vee)$ be a lattice and $[a, b]$ be a subinterval of $L$. A binary operation
$T\colon [a, b]\times [a, b]\to [a, b]$ is said to be a t-norm on $[a, b]$
if, for any $x, y, z\in [a, b]$, the following conditions are fulfilled:
\begin{itemize}\setlength{\itemindent}{10pt}
\item[(i)] $T(x, y)=T(y, x)$ (commutativity);

\item[(ii)] If $x\leq y$, then $T(x, z)\leq T(y, z)$ (increasingness);

\item[(iii)] $T(T(x, y), z)=T(x, T(y, z))$ (associativity);

\item[(iv)]  $T(b, x)=x$ (neutrality).
\end{itemize}
\end{dfnt}

\begin{thrm}\cite{BaeMes99}
Let $(L, \leq, \wedge, \vee)$ be a lattice, $[a, b]$ be a subinterval of $L$ and $c\in [a, b]$. The binary operation $T_c\colon [a, b]\times [a, b]\to [a, b]$ defined by
\[
T_c(x, y)= \left \{
        \begin {array}{ll}
        x\wedge y                   &\quad \mbox{if $b\in\{x, y\}$}  \\[1mm]
        x\wedge y\wedge c           &\quad \mbox{otherwise,}
        \end {array}
       \right.
\]
is a t-norm on $[a, b]$.
\end{thrm}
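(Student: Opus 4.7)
The plan is to verify the four t-norm conditions in turn, with the work concentrating on monotonicity and associativity. Commutativity (i) is immediate because the definition of $T_c(x,y)$ is symmetric in its arguments ($b\in\{x,y\}$ is symmetric, and $x\wedge y\wedge c$ is symmetric). Neutrality (iv) is also immediate: since $b\in\{b,x\}$, the first branch applies and gives $T_c(b,x)=b\wedge x=x$. I should also note in passing that the codomain is correct, since $a\leq x\wedge y\wedge c\leq c\leq b$ and $a\leq x\wedge y\leq b$.

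For monotonicity (ii), assume $x\leq y$ and split on whether $z=b$ and whether $y=b$. If $z=b$, both sides reduce to $x$ and $y$ respectively. If $z\neq b$ and $y=b$, then $T_c(y,z)=z$ while $T_c(x,z)\leq z$. If $z\neq b$ and $y\neq b$, then necessarily $x\neq b$ (otherwise $y=b$), so both sides land in the second branch and the inequality $x\wedge z\wedge c\leq y\wedge z\wedge c$ follows from $x\leq y$.

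The main work is associativity (iii). Here I would use the key observation that for any $u,v\in[a,b]$, $T_c(u,v)=b$ forces $u=v=b$: indeed, if $b\notin\{u,v\}$ then $T_c(u,v)=u\wedge v\wedge c\leq c$, which equals $b$ only if $c=b$ and $u=v=b$, a contradiction; and if $b\in\{u,v\}$, say $v=b$, then $T_c(u,v)=u$, so equals $b$ only if $u=b$. Using this, I analyse $T_c(T_c(x,y),z)$ and $T_c(x,T_c(y,z))$ according to how many of $x,y,z$ equal $b$. If none do, the observation ensures the inner applications also avoid the first branch, so both iterated expressions collapse to $x\wedge y\wedge z\wedge c$. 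If exactly one of $x,y,z$ equals $b$, a direct computation (three subcases, or two up to symmetry from commutativity) shows both sides equal the $T_c$ of the remaining two arguments. If two or three equal $b$, both sides trivially reduce to the remaining argument (or to $b$).

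The only mildly subtle point I expect is keeping the associativity case analysis organised and invoking the observation that ``$T_c$ returns $b$ only on input $(b,b)$'' so that one does not accidentally jump branches when unwinding the outer $T_c$. Everything else is bookkeeping with meets in the lattice.
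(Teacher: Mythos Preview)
Your verification is correct. Note, however, that the paper does not give its own proof of this theorem: it is quoted from \cite{BaeMes99} and stated without argument in the preliminaries section, so there is no in-paper proof to compare against. Your direct check of the four t-norm axioms is the standard route, and the key device you isolate for associativity---that $T_c(u,v)=b$ forces $u=v=b$, so the inner evaluation never escapes into the first branch when none of $x,y,z$ equals $b$---is exactly what makes the case analysis clean. One small wording point: in the monotonicity subcase ``$z\neq b$ and $y=b$'' you do not need to exclude $x=b$ separately, since your claim $T_c(x,z)\leq z$ holds in either event; and in the subsequent subcase your remark that $x\neq b$ follows because $x=b$ together with $x\leq y\leq b$ would force $y=b$.
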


If $c=b$ (resp.\ $c=a$), then we retrieve the strongest (resp.\ weakest) t-norm $T_{\wedge}$ (resp.\ $T_{D}$) on  $[a, b]$.

\subsection{Progress in the study of ordinal sums of t-norms on a bounded lattice}

The following result concerning ordinal sum of t-norms on the real unit interval~$[0, 1]$ is well known.

\begin{thrm}\cite{KleMesPap00}
Let $\{\,]a_{i}, b_{i}[\,\}_{i\in I}$ be a family of (nonempty and) pairwisely disjoint open subintervals of $[0, 1]$ and $\{T_{i}\}_{i\in I}$ be a family of t-norms on $[0, 1]$. Then the binary operation $T=\{\langle a_{i}, b_{i}, T_{i}\rangle\}_{i\in I}\colon [0, 1]\times [0, 1]\to [0, 1]$, called the ordinal sum of $\{T_{i}\}_{i\in I}$, defined by
\[
T(x, y)=\left \{
        \begin {array}{ll}
             a_{i}+(b_{i}-a_{i})T_{i}\left(\dfrac{x-a_{i}}{b_{i}-a_{i}}, \dfrac{y-a_{i}}{b_{i}-a_{i}}\right)
                  &\quad \mbox{if $(x, y)\in [a_{i}, b_{i}]^{2}$}  \\[1mm]
             \min\{x, y\}
                  &\quad \mbox{otherwise,}
        \end {array}
       \right.
\]
is a t-norm on $[0, 1]$.
\end{thrm}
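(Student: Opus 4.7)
The plan is to verify the four defining properties of a t-norm from the previous definition: commutativity, monotonicity, associativity, and neutrality (with $1$ as neutral element). A useful preliminary observation is that each summand, when transported back through the affine bijection $\varphi_i\colon[0,1]\to[a_i,b_i]$, $t\mapsto a_i+(b_i-a_i)t$, induces a t-norm $\widetilde T_i$ on $[a_i,b_i]$, and the formula for $T$ simply says that $T$ agrees with $\widetilde T_i$ on $[a_i,b_i]^2$ and with $\min$ elsewhere. Two elementary facts will be used repeatedly: (a) every t-norm $T_i$ satisfies $T_i(u,v)\leq\min(u,v)$, hence $\widetilde T_i(x,y)\in[a_i,\min(x,y)]\subseteq[a_i,b_i]$ for $(x,y)\in[a_i,b_i]^2$; and (b) if $(x,y)\in[a_i,b_i]^2$ and $x=b_i$ (or $y=b_i$), then $\widetilde T_i(b_i,y)=y$ by neutrality of $T_i$ at $1$, so $T$ and $\min$ coincide on the boundary of each square.

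Commutativity and neutrality are essentially immediate: commutativity follows from the commutativity of each $T_i$ and of $\min$, and neutrality $T(1,x)=x$ is handled by noting that either $1=b_i$ for some $i$ with $(1,x)\in[a_i,b_i]^2$ (invoking fact (b)) or $T(1,x)=\min(1,x)=x$. For monotonicity, fix $z$ and $x\leq y$ and distinguish cases according to whether the pairs $(x,z)$ and $(y,z)$ fall inside a common square $[a_i,b_i]^2$, inside different squares, or outside all squares. When both pairs lie in the same square $[a_i,b_i]^2$, monotonicity of $T_i$ does the job; in the mixed cases the key is to bound the relevant value by $\min$ using fact (a), then compare with $\min(y,z)$; the pairwise disjointness of the open intervals $]a_i,b_i[$ prevents $(x,z)$ and $(y,z)$ from lying in interiors of different squares when $z$ is fixed, so only a handful of subcases actually occur.

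The main obstacle, as usual, is associativity, which requires a case analysis on the locations of $x,y,z$ relative to the intervals $[a_i,b_i]$. The clean case is when all three lie in a single $[a_i,b_i]$: then both sides reduce via $\varphi_i^{-1}$ to an expression involving $T_i$ on $[0,1]$ and are equal by associativity of $T_i$. The genuinely nontrivial cases are when exactly two of the three variables lie in some common $[a_i,b_i]$ while the third does not; here one must argue that the intermediate value produced by $\widetilde T_i$ stays in $[a_i,b_i]$ (by fact (a)), so that the outer operation is forced to be $\min$, and then show that both $T(T(x,y),z)$ and $T(x,T(y,z))$ collapse to the same $\min$ of the three variables possibly clipped to $b_i$. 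The remaining cases, where no two of the variables share a common closed subinterval, reduce to the identity $\min(\min(x,y),z)=\min(x,\min(y,z))$, but one must carefully rule out that an intermediate $\min$ value accidentally re-enters some $[a_j,b_j]$ with the third variable, which is where pairwise disjointness of the open intervals and the equality of $\widetilde T_i$ with $\min$ on the boundary (fact (b)) are crucial. Collecting these case verifications yields the theorem.
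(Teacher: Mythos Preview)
The paper does not prove this theorem at all: it is quoted in the preliminaries as a classical result from \cite{KleMesPap00}, with no accompanying argument. (The paper's own results, Theorems~\ref{thrm-1} and~\ref{mainth}, do generalize this statement to complete lattices, but their proof relies on Observation~\ref{obsonS2}, which in turn invokes Saminger's Proposition~5.2 to handle the chain case $S_2$; so the classical ordinal-sum theorem is effectively assumed, not re-derived.) Hence there is no paper proof to compare against.

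Your sketch is the standard direct verification and is essentially correct. The reduction via the affine bijections $\varphi_i$ to t-norms $\widetilde T_i$ on $[a_i,b_i]$ is exactly the observation the paper makes in Remark~2.4(ii), and facts (a) and (b) are the right tools. One small point worth tightening: in the associativity analysis you say that in the ``remaining cases, where no two of the variables share a common closed subinterval,'' both sides reduce to iterated minima. Be careful here: the intermediate value $\min(x,y)$ can lie in some $[a_j,b_j]$ together with $z$ even if neither $x$ nor $y$ individually shares a closed subinterval with $z$ (for instance, $x$ and $y$ above $b_j$ with $\min(x,y)=b_j$ and $z\in[a_j,b_j]$). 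Your fact (b) does resolve this, since $\widetilde T_j(b_j,z)=z=\min(b_j,z)$, but the phrasing ``accidentally re-enters'' suggests this is exceptional when in fact it must be handled systematically via the boundary identity. With that clarified, the argument goes through.
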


\begin{remk}
(i) It is well known that any set consisting of nonempty and pairwisely disjoint open subintervals of the real unit interval~$[0, 1]$ is countable.

(ii) For any $i\in I$, define $\tilde{T}_{i}\colon [a_i, b_i]\times [a_i, b_i]\to [a_i, b_i]$ as follows:
\[\tilde{T}_{i}(x, y)=a_{i}+(b_{i}-a_{i})T_{i}\left(\dfrac{x-a_{i}}{b_{i}-a_{i}}, \dfrac{y-a_{i}}{b_{i}-a_{i}}\right)\,.\]
It is easy to see that $\tilde{T}_{i}$ is a t-norm on $[a_i, b_i]$. So, in the definition of ordinal sum of t-norms on the real unit interval~$[0, 1]$, we can suppose that $\tilde{T}_{i}$ is a t-norm on $[a_i, b_i]$ for any $i\in I$ and replace $a_{i}+(b_{i}-a_{i})T_{i}\left(\dfrac{x-a_{i}}{b_{i}-a_{i}}, \dfrac{y-a_{i}}{b_{i}-a_{i}}\right)$ by $\tilde{T}_{i}(x, y)$. Based on this observation, one can naturally extend the notion of ordinal sum of t-norms from the real unit interval~$[0, 1]$ to a bounded lattice, as Saminger~\cite{Sam06} did in 2006.
\end{remk}

From here on, $(L, \leq, \wedge, \vee, 0, 1)$ denotes a bounded lattice with top element $1$ and bottom element $0$.

\begin{dfnt}\cite{Sam06}
Let $(L, \leq, \wedge, \vee, 0, 1)$ be a bounded lattice, $\{\,]a_{i}, b_{i}[\,\}_{i\in I}$ be a family of pairwisely disjoint subintervals of $L$ and $\{T_{i}\}_{i\in I}$ be a family of t-norms on these subintervals. The ordinal sum $T=\{\langle a_{i}, b_{i}, T_{i}\rangle\}_{i\in I}\colon L\times L\to L$ is given by
\[
T(x, y)=\left \{
        \begin {array}{ll}
             T_{i}(x, y)
                  &\quad \mbox{if $(x, y)\in [a_{i}, b_{i}]^{2}$}  \\[1mm]
             x\wedge y
                  &\quad \mbox{otherwise.}
        \end {array}
       \right.
\]
\end{dfnt}

According to Saminger~\cite{Sam06}, however, the above ordinal sum is not always a t-norm even if there is only one summand, as the following example shows.

\begin{exa}\cite{Sam06}\label{ex2-4}
Consider the complete lattice $L$ with Hasse diagram shown in Figure~\ref{Hasse}.
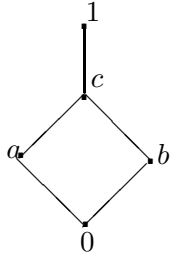
\begin{figure}[t]
\begin{center}
\begin{picture}(150,100)(100,-60)
\put(178,20){$1$}
\put(176,17){$\centerdot$}
\put(177.5,17){\line(0,-1){24}}

\put(180,-5){$c$}
\put(176,-10){$\centerdot$}
\put(178,-7){\line(-1,-1){25}}
\put(152,-32){$\centerdot$}
\put(148,-32){$a$}
\put(178,-8){\line(1,-1){25}}
\put(201,-34){$\centerdot$}
\put(205,-34){$b$}
\put(152,-32){\line(1,-1){25}}
\put(201,-34){\line(-1,-1){24}}
\put(176.5,-58){$\centerdot$}
\put(176,-67){$0$}
\end{picture}
\end{center}
\caption{Hasse diagram of the lattice $L$ in Example~\ref{ex2-4}.}
\label{Hasse}
\end{figure}
The ordinal sum $T=\{\langle a, 1, T_{D}\rangle\}$ given by
Table~\ref{tab-a} is not a t-norm on $L$, since

$T(T(c, c), b)=T(a, b)=0\neq b=T(c, b)=T(c, T(c, b))$.
\begin{table*}
\caption{The ordinal sum $T=\{\langle a, 1, T_{D}\rangle\}$ in Example~\ref{ex2-4}.}
\centering
\begin{tabular}{c|c c c c c c c c c c c }
  \hline
  % after \\: \hline or \cline{col1-col2} \cline{col3-col4} ...
  $T$ & $0$ & $b$ & $a$ & $c$ & $1$\\ \hline
  $0$ & $0$ & $0$ & $0$ & $0$ & $0$\\
  $b$ & $0$ & $b$ & $0$ & $b$ & $b$\\
  $a$ & $0$ & $0$ & $a$ & $a$ & $a$\\
  $c$ & $0$ & $b$ & $a$ & $a$ & $c$\\
  $1$ & $0$ & $b$ & $a$ & $c$ & $1$\\
   \hline
\end{tabular}
\label{tab-a}
\end{table*}
\end{exa}

Several researchers~\cite{Med12,SamKleMes08} characterized when Saminger's ordinal sum of t-norms always leads to a t-norm, while other researchers attempted to modify Saminger's ordinal sum or considered the ordinal sum problem for a particular class of lattices. For instance, Ertu\u{g}rul et al.~\cite{ErtKarMes15} modified Saminger's ordinal sum for one special summand in the following way to make sure it always results in a t-norm.

\begin{thrm}\cite{ErtKarMes15}
Let $(L, \leq, \wedge, \vee, 0, 1)$ be a bounded lattice, $[a, 1]$ be a subinterval of $L$ and $T_{1}$ be a t-norm on $[a, 1]$. Then the binary operation
$T\colon L\times L\to L$ defined by
\begin{equation}\label{eq-tu1}
T(x, y)=\left\{
 \begin {array}{ll}
       T_{1}(x, y)                  &\quad \mbox{if $(x, y)\in [a, 1]^2$}  \\[1mm]
              x\wedge y\wedge a     &\quad \mbox{if $(x, y)\in \left( I_a\times [0, 1[\, \right) \cup \left( [0, 1[\,\times I_a\right)$} \\[1mm]
            x\wedge y               &\quad \mbox{otherwise,}
        \end {array}
       \right.
\end{equation}
is a t-norm on $L$.
\end{thrm}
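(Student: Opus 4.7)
The plan is to verify the four t-norm axioms directly. Commutativity is immediate from the symmetry of the three branches of (\ref{eq-tu1}) in $x$ and $y$. Neutrality is a one-line check: since $1\in[a,1]$ and $1\notin I_a$, for every $x\in L$ the pair $(1,x)$ is handled either by the first branch (when $x\in[a,1]$, giving $T_1(1,x)=x$) or by the third (giving $1\wedge x=x$); the second branch never applies because it requires an argument in $I_a\subseteq[0,1[$ opposite an argument in $[0,1[$.

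The substance lies in monotonicity and associativity. I would partition $L$ into the three pairwise disjoint pieces $A=[a,1]$, $B=[0,a[$, and $C=I_a$, and argue by cases on the types of the arguments. Two preparatory observations streamline everything. First, the range of $T$ avoids $C$: when one argument lies in $C$ the second branch returns a value $\le a$; when both lie in $A$ the first branch returns a value in $[a,1]$; and in every remaining situation at least one of $x,y$ is in $B$, forcing the third branch to produce $x\wedge y\in[0,a[$. Second, if $w\in C$ then $w\wedge a<a$ (otherwise $a\le w$, contradicting $w\in I_a$), so whenever an intermediate value of the form $w\wedge a$ appears it behaves as a $B$-element in any subsequent evaluation of $T$.

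For monotonicity, assume $x\le y$ with $x,y,z\in L$. The hypothesis $x\le y$ rules out the combinations $(t_x,t_y)\in\{(A,B),(A,C),(C,B)\}$; each of the remaining six combinations, for each of the three types of $z$, reduces to one of: monotonicity of $T_1$ (all three in $A$), the trivial chain $T_1(y,z)\ge a>x$ (when $x\in B$ and $y,z\in A$, using that $T_1$ is valued in $[a,1]$), or a routine comparison of meets possibly involving the extra factor $a$. For associativity, set $u=T(x,y)$ and $v=T(y,z)$ and verify $T(u,z)=T(x,v)$; by the first preparatory observation $u,v\notin C$, so the outer evaluations never trigger the ``both arguments in $C$'' configuration. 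A case analysis on $(t_x,t_y,t_z)\in\{A,B,C\}^3$, cut to roughly eighteen subcases by the $(x,z)$-symmetry of the associativity identity under commutativity, invokes associativity of $T_1$ in the all-$A$ case; in every other case both sides collapse to one of $x\wedge y\wedge z$, $x\wedge y\wedge z\wedge a$, or a single argument, and these expressions coincide whenever one of $x,y,z$ lies below $a$. The main obstacle is purely bookkeeping: correctly determining the type of the intermediate values $u$ and $v$ (exactly where the second preparatory observation is used) and then checking that the two sides select values from matching branches of (\ref{eq-tu1}).
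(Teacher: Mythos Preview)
Your proposal is essentially correct, with one small caveat: your first preparatory observation (``the range of $T$ avoids $C$'') fails when one argument equals $1$, since then the third branch can return an element of $I_a$. This is harmless because the cases $1\in\{x,y,z\}$ for monotonicity and associativity are trivial (via neutrality), but you should say so explicitly before invoking the observation; once $x,y,z\in[0,1[$, your claim that $T(x,y),T(y,z)\notin C$ and $T(x,y),T(y,z)\neq 1$ is valid, and the case analysis goes through.

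As for comparison with the paper: the paper does not prove this theorem on its own terms---it is quoted from~\cite{ErtKarMes15} and then recovered later as the degenerate case $n=1$, $b_1=1$ of the general ordinal-sum Theorem~\ref{thrm-1}. The paper's proof of that general theorem rests on the reduction $T(x,y)=T(x\wedge c_i,y)$ for $x\in A_2^i$, $y\neq 1$ (Observation~\ref{obs2}), which collapses every $I_{c_i}$-argument to a comparable one and then appeals to Saminger's result on the chain-like sublattice $S_2$. Your second preparatory observation ($w\in I_a\Rightarrow w\wedge a\in[0,a[$) is the one-summand avatar of exactly this reduction, but instead of invoking an external result on $S_2$ you carry out the $\{A,B,C\}^3$ bookkeeping by hand. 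So the two arguments share the same pivotal idea; yours is more elementary and self-contained, while the paper's abstracts the reduction into a lemma that scales to countably many summands without repeating the casework.
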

\begin{remk}
Expression~\eqref{eq-tu1} looks different from the corresponding expression in Theorem~1 of~\cite{ErtKarMes15}, but they are essentially the same.
\end{remk}

\section{An alternative definition of ordinal sum of t-norms on a bounded lattice}\label{Sec-3}

In this section, appropriately dealing with those elements that are incomparable with the endpoints of subintervals, we propose an alternative definition
of ordinal sum of t-norms on a bounded lattice and prove it to always result in a t-norm.

We start by decomposing a bounded lattice with respect to a countable chain, which is crucial in our definition of ordinal sum of t-norms.
Let $(L, \leq, \wedge, \vee, 0, 1)$ be a bounded lattice and $\{c_{i}\}_{i\in \Z}\subseteq L$ be given with $c_{i}\leq c_{i+1}$,
where $\Z$ is the set of all integers. Then $L=S_{1}\cup S_{2}$ and $S_{1}\cap S_{2}=\emptyset$, where
\[
S_{1}=\{x\in L\mid (\exists i\in \Z) (x\in I_{c_{i}})\}=\bigcup_{i\in \Z} I_{c_{i}}
\]
and
\[
S_{2}=\{x\in L \mid (\forall i\in \Z)(x\notin I_{c_{i}})\}=\bigcap_{i\in \Z} L\setminus I_{c_{i}}=L\setminus\bigcup_{i\in \Z} I_{c_{i}}\,.
\]
Further, $S_{1}=A_{1}\cup A_{2}$ and $A_{1}\cap A_{2}=\emptyset$, where
\[
A_{1}=\{x\in S_{1} \mid \inf\{i\in \Z \mid x\in I_{c_{i}}\}=-\infty\}
\]
and
\[
A_{2}=\{x\in S_{1} \mid \inf\{i\in  \Z \mid x\in I_{c_{i}}\}\in \Z\}\,.
\]
It is not difficult to prove that
\[
A_{2}=\bigcup_{i\in \Z} A_{2}^i\,,
\]
where $A_{2}^i=\,]c_{i-1}, 1[\,\cap I_{c_{i}}$.

We furthermore divide $S_{2}$ into three subsets $B_{1}$, $B_{2}$ and $B_{3}$, i.e.,
$S_{2}=B_{1}\cup B_{2}\cup B_{3}$, where
\arraycolsep=2pt
\begin{eqnarray*}
B_{1}&=&\{x\in S_{2} \mid (\forall i\in \Z)(x \geq c_{i})\} =\bigcap_{i\in \Z} [c_{i}, 1]\,, \\[1mm]
B_{2}&=& \{x\in S_{2} \mid (\forall i\in \Z)(x\leq c_{i}) \} =\bigcap_{i\in \Z} [0, c_{i}]
\end{eqnarray*}
and
\[
B_{3}=\{x\in S_{2} \mid (\exists i\in \Z)(x\in [c_{i-1}, c_{i}])\}=\bigcup_{i\in \Z} [c_{i-1}, c_{i}]\,.
\]
Let us further denote
\arraycolsep=2pt
\begin{eqnarray*}
\Delta_1   &=& \left( A_{1}\times [0, 1[\,\right) \cup \left([0, 1[\,\times A_{1}\right)\\[1mm]
\Delta_2^i &=& \left( A_{2}^i\times [c_{i-1}, 1[\,\right) \cup \left( [c_{i-1}, 1[\,\times A_{2}^i\right)\,.
\end{eqnarray*}

We give an example to illustrate the above decomposition.

\begin{exa}\label{ex-3-new}
Let $L=[0, 1]\times [0, 1]$. Define the partial order $\preceq$ on $L$ componentwisely, i.e.,
\[x=(x^{(1)}, x^{(2)})\preceq y=(y^{(1)}, y^{(2)})
  \Longleftrightarrow x^{(n)}\leq y^{(n)}~  (n=1, 2)\,.\]

The meet $\sqcap$ and the join $\sqcup$ with respect to $\preceq$ are given as follows:
\arraycolsep=2pt
\begin{eqnarray*}
(x^{(1)}, x^{(2)}) \sqcap (y^{(1)}, y^{(2)}) &=&(x^{(1)}\wedge y^{(1)}, x^{(2)}\wedge y^{(2)})\\[1mm]
(x^{(1)}, x^{(2)})\sqcup (y^{(1)}, y^{(2)})  &=&(x^{(1)}\vee y^{(1)}, x^{(2)}\vee y^{(2)})\,.
\end{eqnarray*}
Obviously, $(L, \preceq, \sqcap, \sqcup, (0, 0), (1, 1))$ is a complete lattice.

Define $\{c_{i}\}_{i\in \Z}\subset L$ as follows:
\[c_i=(c_i^{(1)}, c_{i}^{(2)})=(\dfrac{1}{3\pi}\arctan i+\dfrac{1}{2}, \dfrac{1}{3\pi}\arctan i+\dfrac{1}{2})\,.\]
Then $c_{i}\preceq c_{i+1}$ and $\bigwedge\limits_{i\in \Z} c_{i}=(\dfrac{1}{3}, \dfrac{1}{3})$. It is not difficult to see that
\[
A_{1}=\left( [0, \dfrac{1}{3}] \times\, ]\dfrac{1}{3}, 1]\right) \cup \left(\, ]\dfrac{1}{3}, 1] \times [0, \dfrac{1}{3}]\right)\,,\quad A_{2}=\bigcup_{i\in \Z} A_{2}^{i},
\]
 where
 \[A_2^i=\left( [c_{i-1}^{(1)}, c_i^{(1)}[\,\times ]c_i^{(2)}, 1]\right) \cup \left(\,]c_i^{(1)}, 1] \times [c_{i-1}^{(2)}, c_i^{(2)}[\,\right)\,,\]
\[
B_{1}=[\dfrac{2}{3}, 1]\times [\dfrac{2}{3}, 1],\quad B_{2}=[0, \dfrac{1}{3}]\times [0, \dfrac{1}{3}],\, \text{and}\ B_{3}=\bigcup_{i\in \Z}[c_{i-1}, c_{i}],
\]
where
\[
[c_{i-1}, c_{i}]=\left\{(x, y)\ |\ \dfrac{1}{3\pi}\arctan (i-1)+\dfrac{1}{2}\leq x, y\leq \dfrac{1}{3\pi}\arctan i+\dfrac{1}{2}\right\}.
\]
So,
\[
S_{2}=B_{1}\cup B_{2}\cup B_{3},\mbox{~and~}S_{1}=L\setminus S_{2}.
\]
Moreover,
  \[\Delta_1=\Big(A_{1}\times [(0, 0), (1, 1)[\,\Big) \cup \Big( [(0, 0), (1, 1)[\,\times A_{1}\Big)\]
and
 \[\Delta_2^i= \left(A_{2}^i\times [c_{i-1}, (1, 1)[\,\right) \cup \left([c_{i-1}, (1, 1)[\,\times A_{2}^i\right)\,.\]
\end{exa}

We are now ready to propose our definition of ordinal sum of t-norms. First, we consider the case of contiguous subintervals.

\begin{dfnt}\label{dfnt-1}
Let $(L, \leq, \wedge, \vee, 0, 1)$ be a complete lattice, $\{c_{i}\}_{i\in \Z}\subseteq L$ with $c_{i}\leq c_{i+1}$, $c=\bigwedge\limits_{i\in \Z} c_{i}$
 and $\{T_{i}\}_{i\in \Z}$ be a family of t-norms on the subintervals $\{[c_{i-1}, c_{i}]\}_{i\in \Z}$. The ordinal sum
$T=\{\langle c_{i-1}, c_{i}, T_{i}\rangle\}_{i\in \Z}\colon L\times L\to L$ is given by
\begin{equation}\label{eq-os1}
T(x, y) =\left \{
        \begin{array}{ll}
             T_{i}(x, y)
                  &\quad \mbox{if $(x, y)\in [c_{i-1}, c_{i}]^2$}  \\[1mm]
             T_{i}(x\wedge c_{i}, y\wedge c_{i})
                  &\quad \mbox{if $(x, y)\in \Delta_2^i$}\\[1mm]
             x\wedge y\wedge c
             &\quad \mbox{if $(x, y)\in \Delta_1$}\\[1mm]
             x\wedge y
                  &\quad \mbox{otherwise.}
        \end{array}
       \right.
\end{equation}
\end{dfnt}

\begin{remk}
(i) For any $i, j\in \Z$, it holds that
\[\Delta_2^i\cap \Delta_1=[c_{i-1}, c_{i}]^2\cap \Delta_1=[c_{i-1}, c_{i}]^2\cap \Delta_2^j=\emptyset\,.\]
In addition, for any $i, j\in \Z$ with $i\not=j$, it holds that
\[\Delta_2^i\cap \Delta_2^j=\emptyset\,.\]
Hence, the operation in~\eqref{eq-os1} is well defined.

(ii) The completeness requirement for $L$ is only used to ensure the existence of $\bigwedge\limits_{i\in \Z} c_{i}$. We could just suppose that $L$ is complete with respect to meet, but meet-completeness implies join-completeness since $L$ has a top element. The completeness requirement is not needed when there exists $i\in \Z$ such that $c_j=c_i$ for any $j<i$, in particular when dealing with finitely many contiguous subintervals.
\end{remk}

Second, we consider the case of not necessarily contiguous subintervals, whose endpoints form a chain.
Let $(L, \leq, \wedge, \vee, 0, 1)$ be a complete lattice, $\left\{[a_{i}, b_{i}]\right\}_{i\in \Z}$ be a family of subintervals of $L$ with $b_{i}\leq a_{i+1}$, $a=\bigwedge\limits_{i\in \Z} a_{i}$ and $\{T_{i}\}_{i\in \Z}$ be a family of t-norms on these subintervals.

We intend to use \eqref{eq-os1} to define the ordinal sum
$T=\{\langle a_{i}, b_{i}, T_{i}\rangle\}_{i\in \Z}$. The process is divided into three steps.

\noindent {\bf Step 1.}
Define $\{c_{i}\}_{i\in \Z}\subseteq L$ as follows:
\[
c_{2i-1}=a_{i} \quad \mbox{and}\quad c_{2i}=b_{i}\,.
\]
It holds that (1) $c_{i}\leq c_{i+1}$; (2) $\bigwedge\limits_{i\in \Z} c_{i}=\bigwedge\limits_{i\in \Z} a_{i}=a$; (3) $[c_{2i-1}, c_{2i}]=[a_{i}, b_{i}]$; (4) $[c_{2i}, c_{2i+1}]=[b_{i}, a_{i+1}]$.

\noindent {\bf Step 2.}
For any $i\in \Z$, endow $[c_{i-1}, c_{i}]$ with a t-norm $\hat{T}_{i}$ as follows:
\[
\hat{T}_{2i}=T_{i} \quad\mbox{and}\quad \hat{T}_{2i+1}=T_{\wedge}\,.
\]

\noindent {\bf Step 3.}
Define the ordinal sum
$T=\{\langle a_{i}, b_{i}, T_{i}\rangle\}_{i\in \Z}$ as $\{\langle c_{i-1}, c_{i}, \hat{T}_{i}\rangle\}_{i\in \Z}$, i.e.,
\begin{equation}\label{eq-os2}
\hspace{-0.4cm}T(x, y) =\left \{
        \begin {array}{ll}
            \hat{T}_{i}(x, y)                               & \quad\mbox{if $(x, y)\in [c_{i-1}, c_{i}]^2$}  \\[1mm]
            \hat{T}_{i}(x\wedge c_{i}, y\wedge c_{i})  & \quad\mbox{if $(x, y)\in \Delta_2^i$} \\[1mm]
             x\wedge y\wedge a                              &\quad\mbox{if $(x, y)\in \Delta_1$} \\[1mm]
             x\wedge y                                      &\quad\mbox{otherwise.}
        \end {array}
       \right.
\end{equation}
It is routine to check that~\eqref{eq-os2} is the same as~\eqref{eq-os3}:
\begin{equation}\label{eq-os3}
\hspace{-0.2cm}T(x, y) =\left \{
        \begin{array}{ll}
             T_{i}(x, y)                               &\quad \mbox{if $(x, y)\in [a_{i}, b_{i}]^{2}$}  \\[1mm]
             T_{i}(x\wedge b_{i}, y\wedge b_{i})  &\quad \mbox{if $(x, y)\in \Lambda_3^i$}\\
              x\wedge y\wedge a_{i}                    & \quad\mbox{if $(x, y)\in \Lambda_2^i$} \\ [1mm]
              x\wedge y\wedge a                        & \quad\mbox{if $(x, y)\in \Lambda_1$} \\[1mm]
              x\wedge y                                & \quad\mbox{otherwise,}
        \end{array}
       \right.
\end{equation}
where
\arraycolsep=2pt
\begin{eqnarray*}
\Lambda_3^i = \Delta_2^{2i}   &=& \Big( \left( ]a_{i}, 1[\,\cap I_{b_{i}}\right) \times [a_{i}, 1[\,\Big)
                            \cup \Big( [a_{i}, 1[\,\times \left(\,]a_{i}, 1[\,\cap I_{b_{i}}\right)\Big)\,,\\[1mm]
\Lambda_2^i = \Delta_2^{2i-1} &=& \Big(\left(\,]b_{i-1}, 1[\,\cap I_{a_{i}}\right)\times [b_{i-1}, 1[\,\Big)
                            \cup \Big( [b_{i-1}, 1[\,\times \left(\,]b_{i-1}, 1[\,\cap I_{a_{i}}\right)\Big)
\end{eqnarray*}
and
\[\Lambda_1=\Delta_1=\Big(A'_{1}\times [0, 1[\,\Big)\cup \Big([0, 1[\,\times A'_{1}\Big),\]
where
\[
A'_{1}=\{x\in L \mid (\exists i\in \Z)(x\in I_{a_{i}}) \mbox{ and } \inf\{i\in \Z \mid x\in I_{a_{i}}\}=-\infty\}\,.
\]

To summarize, for the case of not necessarily contiguous subintervals, we define the ordinal sum of t-norms as follows.

\begin{dfnt}\label{dfnt-2}
Let $(L, \leq, \wedge, \vee, 0, 1)$ be a complete lattice, $\left\{[a_{i}, b_{i}]\right\}_{i\in \Z}$ be a family of subintervals of $L$ with $b_{i}\leq a_{i+1}$, $a=\bigwedge\limits_{i\in \Z} a_{i}$ and $\{T_{i}\}_{i\in \Z}$ be a family of t-norms on these subintervals. The ordinal sum
$T=\{\langle a_{i}, b_{i}, T_{i}\rangle\}_{i\in \Z}$ is given by~\eqref{eq-os3}.
\end{dfnt}

We are now going to prove our main theorems.

\begin{thrm}\label{thrm-1}
Let $(L, \leq, \wedge, \vee, 0, 1)$ be a complete lattice, $\{c_{i}\}_{i\in \Z}\subseteq L$ with $c_{i}\leq c_{i+1}$,
$c=\bigwedge\limits_{i\in \Z} c_{i}$ and $\{T_{i}\}_{i\in \Z}$ be a family of t-norms on the
subintervals $\{[c_{i-1}, c_{i}]\}_{i\in \Z}$. Then the ordinal sum
$T=\{\langle c_{i-1}, c_{i}, T_{i}\rangle\}_{i\in \Z}\colon L\times L\to L$ given by \eqref{eq-os1}
is a t-norm on $L$.
\end{thrm}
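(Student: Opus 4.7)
The plan is to verify commutativity, neutrality, monotonicity, and associativity in turn. Commutativity is immediate because every branch of \eqref{eq-os1} is symmetric in its two arguments: the sets $[c_{i-1},c_i]^2$, $\Delta_2^i$, and $\Delta_1$ are all symmetric subsets of $L\times L$, and each right-hand side ($T_i$, $T_i$ applied to meets, $x\wedge y\wedge c$, $x\wedge y$) is symmetric. For neutrality I would check that $T(1,x)=x$ for every $x\in L$; the key observation is that the pair $(1,x)$ cannot lie in $\Delta_1$ or in any $\Delta_2^i$, since both of these sets require one coordinate to belong to a set contained in $[0,1[$ (respectively $A_2^i\subseteq[c_{i-1},1[$), and $1$ is excluded. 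Consequently $(1,x)$ falls either into some $[c_{i-1},c_i]^2$, which forces $c_i=1$ and evaluates to $T_i(1,x)=x$ by the neutrality of $T_i$, or into the ``otherwise'' branch, yielding $1\wedge x=x$.

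Monotonicity is proved by fixing $z\in L$ and $x\leq y$ and case-splitting on the type of $z$ (whether it lies in some $[c_{j-1},c_j]$, in $B_1$, in $B_2$, in $A_1$, or in some $A_2^j$) and then on the types of $x$ and $y$. Within each cell, $T(x,z)\leq T(y,z)$ reduces to the monotonicity of the appropriate $T_j$ together with that of $\wedge$; the border subcases, in which $x$ and $y$ belong to different types, are controlled by the universal bound $T_j(u,v)\leq u\wedge v$ and by the monotonicity of the projection $w\mapsto w\wedge c_j$.

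Associativity is where the main obstacle lies. I plan to begin with three reduction lemmas that collapse most of the case table. First, if $x\leq c$ then one checks from \eqref{eq-os1} that $T(x,w)=x\wedge w$ for every $w\in L$, so both $T(T(x,y),z)$ and $T(x,T(y,z))$ equal $x\wedge y\wedge z$ whenever any argument lies below $c$. Second, if $x\in A_1$ then $T(x,w)=x\wedge w\wedge c$ for $w<1$ and $T(x,1)=x$, and iterating yields $x\wedge y\wedge z\wedge c$ on both sides. Third, if $x\in A_2^i$ then $T(x,w)$ is one of $T_i(x\wedge c_i,w\wedge c_i)$, $x\wedge w\wedge c$, or $x\wedge w$ according to the type of $w$, and each of these is bounded above by $c_i$; this bound lets the outer application collapse predictably. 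After these reductions the only remaining case has $x,y,z$ all in $\bigcup_i[c_{i-1},c_i]$: if all three sit in a common interval the identity is immediate from associativity of the corresponding $T_j$, while if they lie in distinct intervals both iterated products collapse to $x\wedge y\wedge z$ because each nested application pins one coordinate at a block endpoint $c_k$ via the identity $T_j(c_{j-1},\cdot)=c_{j-1}\wedge\cdot$. I expect the hardest subcases to be those in which two $A_2^i$-elements with different indices interact, or in which an $A_2^i$-element meets a $B_3$-element in a different block; the verification there rests on the chain structure $c_{i-1}\leq c_i\leq c_{j-1}\leq c_j$ for $i<j$ together with the bound $T_k(u,v)\leq u\wedge v$, and the real challenge is organising the bookkeeping so that no mixed subcase is overlooked.
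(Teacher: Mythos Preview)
Your plan is broadly sound, and the strategy of eliminating $A_1$- and $A_2$-arguments before reducing to the ``chain-like'' part of $L$ is the right idea. A few imprecisions are worth flagging. Your description of $T(x,w)$ for $x\in A_2^i$ is incomplete: when $w\in A_2^j$ with $j<i$, the pair $(x,w)$ lies in $\Delta_2^j$ rather than $\Delta_2^i$, so $T(x,w)=T_j(x\wedge c_j,w\wedge c_j)=w\wedge c_j$, which is none of the three forms you list (it is not $x\wedge w$ in general). Also, after your three reductions the remaining arguments lie in $B_1\cup B_3$, not just $B_3=\bigcup_i[c_{i-1},c_i]$; elements above every $c_i$ need a short separate treatment. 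Finally, in your second reduction the phrase ``iterating yields $x\wedge y\wedge z\wedge c$ on both sides'' hides a step: for $T(x,T(y,z))$ with $x\in A_1$ you need the auxiliary fact $T(y,z)\wedge c=y\wedge z\wedge c$, which is true but requires its own small case check.

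The paper organises the same reduction much more economically. Rather than classifying the possible values of $T(x,w)$, it proves two pointwise \emph{projection identities}: $T(x,y)=T(x\wedge c,\,y)$ whenever $x\in A_1$ and $y<1$, and $T(x,y)=T(x\wedge c_i,\,y)$ whenever $x\in A_2^i$ and $y<1$. Since $x\wedge c\in B_2$ and $x\wedge c_i\in[c_{i-1},c_i]$, these identities replace any $S_1$-argument by an $S_2$-argument without altering the value of $T$; monotonicity and associativity then reduce in one stroke to $S_2=B_1\cup B_2\cup B_3$, on which every $I_{c_i}$ is empty and Saminger's ordinal sum is already known to be a t-norm. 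Your route and the paper's are equivalent in content, but the paper's ``project, then cite'' argument avoids the combinatorial explosion you anticipate in the mixed $A_2^i$/$A_2^j$ and $A_2^i$/$B_3$ subcases---the very bookkeeping you identify as the real challenge simply disappears.
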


The following observations play a key role in simplifying the proof of Theorem~\ref{thrm-1}.

\begin{obs}\label{obsonS2}
The restriction $T|_{S_2}$ of $T$ to $S_2\times S_2$ is a t-norm on $S_2$, where
$T|_{S_2}\colon S_2\times S_2\to S_2$ is given by
\begin{equation*}
T|_{S_2}(x, y) =\left \{
        \begin{array}{ll}
             T_{i}(x, y)
                  &\quad \mbox{if $(x, y)\in [c_{i-1}, c_{i}]^2$}  \\[1mm]
             x\wedge y
                  &\quad \mbox{otherwise.}
        \end{array}
       \right.
\end{equation*}
\end{obs}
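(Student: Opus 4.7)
The strategy is to verify the four t-norm axioms for $T|_{S_2}$, exploiting throughout the defining feature of $S_2$: every element of $S_2$ is comparable with every $c_i$.

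Closure, commutativity, and neutrality are quickly dispatched. For $x, y \in S_2$ the pair $(x, y)$ cannot belong to $\Delta_1$ or any $\Delta_2^i$, since both require a coordinate in $S_1$; hence $T(x, y)$ is either $T_i(x, y) \in [c_{i-1}, c_i] \subseteq S_2$ or $x \wedge y$, and a short case split on the positions of $x, y$ relative to each $c_i$ confirms that $x \wedge y$ is comparable with every $c_i$ and hence lies in $S_2$. Commutativity is inherited from the $T_i$'s and the meet. Since $1 \in B_1 \subseteq S_2$, the identity $T(1, x) = x$ follows either from neutrality of some $T_i$ (when $c_i = 1$) or from $1 \wedge x = x$. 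Monotonicity is similarly handled by cases: if $x \leq y$ in $S_2$ and $z \in S_2$ with $(y, z) \in [c_{j-1}, c_j]^2$ but $(x, z) \notin [c_{j-1}, c_j]^2$, comparability forces $x < c_{j-1}$, so $T(x, z) = x \wedge z \leq c_{j-1} \leq T_j(y, z) = T(y, z)$; the remaining sub-cases are direct.

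The principal step, and the main obstacle, is associativity: $T(T(x, y), z) = T(x, T(y, z))$ for $x, y, z \in S_2$. I would proceed by case analysis on whether $\{x, y, z\}$ shares a common interval $[c_{i-1}, c_i]$ and which of the pairs $(x, y)$ and $(y, z)$ lie in some $[c_{j-1}, c_j]^2$. If the whole triple lies in a single interval, the equation follows from associativity of $T_i$. Otherwise, the key reduction is that for any $u \in [c_{i-1}, c_i]$ and any $v \in S_2 \setminus [c_{i-1}, c_i]$, comparability of $v$ with $c_{i-1}$ and $c_i$ forces either $v < c_{i-1}$ (so $T(u, v) = v$) or $v > c_i$ (so $T(u, v) = u$). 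Applying this rule to each intermediate value $T(x, y)$ or $T(y, z)$ and pairing with the remaining argument, both bracketings can be computed explicitly and are seen to coincide on the common value $T_i(x, y)$, $T_j(y, z)$, a shared $T_k$-value arising when $y$ lies above or below a pair $(x, z)$ that itself sits in some interval, or $x \wedge y \wedge z$, according to the sub-case.

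The chief technical difficulty is the bookkeeping around boundary points where an element equals some $c_i$ and thereby belongs to two adjacent intervals simultaneously. At such points the two candidate formulas for $T$ return the same value thanks to the neutrality identity $T_i(c_i, w) = w$ at the top of $[c_{i-1}, c_i]$ and the bottom-absorption identity $T_{i+1}(c_i, w) = c_i$ on $[c_i, c_{i+1}]$. Once these boundary consistencies are noted, the remaining associativity case analysis, though tedious, proceeds routinely.
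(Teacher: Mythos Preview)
Your direct verification is correct and complete in outline, but the paper takes a markedly shorter route. The paper simply observes that $S_2$ is a bounded sublattice of $L$ containing $0$ and $1$, and that \emph{within $S_2$} every element is comparable with every $c_i$ (i.e., $I_{c_i}=\emptyset$ there). On such a lattice Saminger's na\"{\i}ve ordinal sum is already known to be a t-norm; the paper invokes Proposition~5.2 of Saminger's 2006 paper and is done in two lines.

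Your approach, by contrast, re-derives from scratch the special case of Saminger's result that is needed here. The case analysis you sketch---the absorption identities $T(u,v)=u$ when $v>c_i$ and $T(u,v)=v$ when $v<c_{i-1}$, together with the boundary reconciliation $T_i(c_i,\,\cdot\,)=\mathrm{id}$ and $T_{i+1}(c_i,\,\cdot\,)=c_i$---is exactly the engine behind that proposition, so your argument is sound. The trade-off is clear: your proof is self-contained and makes the mechanism visible, at the cost of a routine but lengthy associativity case split; the paper's proof is a one-line citation that hides the work inside an external reference. Either is acceptable; if you want to match the paper's economy, it suffices to note that $(S_2,\leq,\wedge,\vee,0,1)$ is a bounded lattice in which each $c_i$ has empty incomparability set and then cite Saminger.
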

\noindent In fact, $\{0, 1\}\subseteq S_2$ and $(S_2, \leq, \wedge, \vee, 0, 1)$ is a bounded lattice. In this lattice, $I_{c_{i}}=\emptyset$
for all $i\in \Z$. Therefore, it follows from Proposition 5.2 in~\cite{Sam06} that $T|_{S_2}$ is a t-norm on $S_2$.

\begin{obs}\label{obs1}
$T(x, y)=T(x\wedge c, y)$ for any $x\in A_{1}$ and any $y\in [0, 1[$\,.
\end{obs}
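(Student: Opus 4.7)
The plan is to compute both sides of $T(x,y)=T(x\wedge c,y)$ separately and show they agree, using the defining formula~\eqref{eq-os1} as the organizing principle. The left-hand side is immediate: since $x\in A_1$ and $y\in[0,1[$, the pair $(x,y)$ lies in $A_1\times[0,1[\,\subseteq\Delta_1$, so $T(x,y)=x\wedge y\wedge c$. The nontrivial work is identifying where $(x\wedge c,y)$ falls among the four regions in~\eqref{eq-os1} and checking equality case by case.

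Before the case analysis I would record three preparatory facts about $x\wedge c$. First, $x\wedge c\leq c\leq c_i$ for every $i\in\Z$, so $x\wedge c\in B_2\subseteq S_2$; in particular $x\wedge c\notin A_1$. Second, $c<1$: otherwise every $c_i$ would equal $1$, making $I_{c_i}=\emptyset$ and $A_1=\emptyset$, contradicting $x\in A_1$. Hence $x\wedge c\in[0,1[$, so the formula~\eqref{eq-os1} does apply to $(x\wedge c,y)$. Third, the usual boundary property of a t-norm on a subinterval: $T_i(c_{i-1},z)=c_{i-1}$ for all $z\in[c_{i-1},c_i]$, obtained from $T_i(c_{i-1},z)\leq T_i(c_i,c_i)\wedge z$ combined with $T_i(c_{i-1},z)\geq c_{i-1}$.

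With these in place, I would split on which region of~\eqref{eq-os1} contains $(x\wedge c,y)$. If $(x\wedge c,y)\in[c_{i-1},c_i]^2$, then $c_{i-1}\leq x\wedge c\leq c\leq c_{i-1}$ forces $x\wedge c=c=c_{i-1}$, so $T(x\wedge c,y)=T_i(c_{i-1},y)=c$; meanwhile $x\geq c$ and $y\geq c_{i-1}=c$ give $x\wedge y\wedge c=c$. If $(x\wedge c,y)\in\Delta_2^i$, the $A_2^i$ slot cannot be filled by $x\wedge c$ (since $x\wedge c\leq c_i$ rules out incomparability with $c_i$), so $y\in A_2^i$ and $x\wedge c\in[c_{i-1},1[$; the same squeeze yields $x\wedge c=c=c_{i-1}$, and $T(x\wedge c,y)=T_i(c,y\wedge c_i)=c=x\wedge y\wedge c$ because $y>c_{i-1}=c$. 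If $(x\wedge c,y)\in\Delta_1$, the fact that $x\wedge c\notin A_1$ forces $y\in A_1$, and then $T(x\wedge c,y)=(x\wedge c)\wedge y\wedge c=x\wedge y\wedge c$ directly. In the remaining \emph{otherwise} case, $T(x\wedge c,y)=(x\wedge c)\wedge y=x\wedge y\wedge c$.

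The main obstacle is the bookkeeping in the first two cases: one must notice that membership of $x\wedge c$ in $[c_{i-1},c_i]$ or in the $[c_{i-1},1[$ slot of $\Delta_2^i$ can only occur when the chain $\{c_j\}$ has stabilised at $c$ from some index downward, which forces $x\wedge c$ to coincide with $c$ and with $c_{i-1}$ simultaneously. Once this collapse is recognised, the boundary property $T_i(c_{i-1},\cdot)=c_{i-1}$ does the rest, and all four cases yield the common value $x\wedge y\wedge c$, proving the observation.
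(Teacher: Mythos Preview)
Your proof is correct and follows the same overall strategy as the paper: compute $T(x,y)=x\wedge y\wedge c$ directly from the $\Delta_1$ branch, then determine which branch of~\eqref{eq-os1} governs $T(x\wedge c,y)$.

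The difference is one of economy. The paper splits into only two cases, $y\in A_1$ versus $y\notin A_1$, claiming that in the latter case $(x\wedge c,y)$ falls into the ``otherwise'' branch. Your first two cases---where $(x\wedge c,y)$ lands in $[c_{i-1},c_i]^2$ or in $\Delta_2^i$---are in fact vacuous. You correctly derive that they force $x\wedge c=c=c_{i-1}$; but once $c=c_{i-1}$, monotonicity of the chain gives $c_j=c$ for every $j\leq i-1$. Since $x\in A_1$ means $x\in I_{c_j}$ for arbitrarily negative $j$, this yields $x\in I_c$, contradicting $x\wedge c=c$ (which says $c\leq x$). So those two branches can never occur, and your careful computations there, while valid, are computations in an empty case. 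Recognising this vacuity is what lets the paper collapse the argument to two lines; your version trades that insight for a fully mechanical verification, which also works.
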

\noindent In fact, for any $x\in A_{1}$ and any $y\in [0, 1[$, it holds that $T(x, y)=x\wedge y\wedge c$. Note that $x\wedge c\leq c\leq c_{i}$ for any $i\in \Z$.
If $y\in A_{1}$, then
  \[
  T(x\wedge c, y)=(x\wedge c)\wedge y\wedge c=x\wedge y\wedge c=T(x, y)\,.
  \]
  Otherwise,
  \[
  T(x\wedge c, y)=(x\wedge c)\wedge y=x\wedge y\wedge c=T(x, y)\,.
  \]

\begin{obs}\label{obs2}
$T(x, y)=T(x\wedge c_{i}, y)$ for any $x\in A_{2}^i$ and any $y\in [0, 1[$\,.
\end{obs}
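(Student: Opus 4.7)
The plan is to mimic the proof of Observation~\ref{obs1}: set $x':=x\wedge c_i$, identify which clause of~\eqref{eq-os1} each of the pairs $(x,y)$ and $(x',y)$ triggers, and verify that the two outputs coincide. A preliminary computation pins down $x'$: since $x\in A_2^i=\,]c_{i-1},1[\,\cap I_{c_i}$, the incomparability $x\in I_{c_i}$ forces $x\wedge c_i<c_i$, while $x>c_{i-1}$ gives $x\wedge c_i\geq c_{i-1}$, so $c_{i-1}\leq x'<c_i$. Moreover $x'\geq c_j$ for every $j\leq i-1$ and $x'\leq c_j$ for every $j\geq i$, so $x'\in S_2$; in particular $x'\notin A_1$ and $x'\notin A_2^k$ for every $k$, which already rules out the $\Delta_1$ and $\Delta_2^k$ clauses on the $x'$-slot throughout the argument.

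I would then split on $y$ into three regimes. If $y\in[c_{i-1},c_i]$, then $(x,y)\in\Delta_2^i$ yields $T(x,y)=T_i(x',y\wedge c_i)=T_i(x',y)$, while $(x',y)\in[c_{i-1},c_i]^2$ yields the identical $T_i(x',y)$. If $c_i<y<1$, then still $y\in[c_{i-1},1[$, so $(x,y)\in\Delta_2^i$ and, by neutrality of $c_i$ in $T_i$, $T(x,y)=T_i(x',c_i)=x'$; for $T(x',y)$ one verifies that $(x',y)$ lands in the final clause of~\eqref{eq-os1}, whence $T(x',y)=x'\wedge y=x'$ since $x'<c_i<y$. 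If $y\not\geq c_{i-1}$, then $y\notin A_2^k$ for any $k\geq i$ (otherwise $y\geq c_{k-1}\geq c_{i-1}$), and the subcases $y\in A_1$, $y\in A_2^k$ with $k<i$, and $y\in S_2$ with $y<c_{i-1}$ are handled in turn: the first puts both pairs in $\Delta_1$ with output $x\wedge y\wedge c=x'\wedge y\wedge c$ (using $c\leq c_i$); the second puts both in $\Delta_2^k$, and since $x,x'\geq c_{i-1}\geq c_k$, neutrality of $c_k$ collapses both values to $y\wedge c_k$; the third yields $T(x,y)=y=T(x',y)$ via the last clause of~\eqref{eq-os1}, save for a degenerate $[c_{k-1},c_k]^2$ configuration in which boundary neutrality restores the equality.

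The main obstacle is the exclusion bookkeeping in the case $c_i<y<1$ (and, less acutely, in the third subcase above), namely showing that $(x',y)$ is not spuriously captured by $\Delta_1$, some $\Delta_2^k$, or a $[c_{k-1},c_k]^2$ clause. The $y$-slot of $\Delta_1$ is blocked by $y\geq c_j$ for all $j\leq i$ (so $y\notin A_1$), and the $y$-slot of $\Delta_2^k$ forces $k\geq i+1$, whence $c_{k-1}\geq c_i>x'$ keeps $x'$ out of $[c_{k-1},1[$. The $[c_{k-1},c_k]^2$ clause is controlled by the chain order: $k\geq i+1$ requires $x'\geq c_i$, $k=i$ requires $y\leq c_i$, and $k\leq i-1$ squeezes $c_{i-1}\leq x'\leq c_k\leq c_{i-1}$, hence the boundary $x'=c_{i-1}=c_k$, which is then contradicted by $y>c_i\geq c_k$.
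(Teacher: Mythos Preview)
Your three-way split on $y$ has a hole: the case $y\in A_2^i$ is not covered. Such a $y$ satisfies $y>c_{i-1}$, so your third regime ``$y\not\geq c_{i-1}$'' does not apply, yet $y\in I_{c_i}$, so neither $y\in[c_{i-1},c_i]$ nor $c_i<y$ holds. This is exactly the situation where \emph{both} arguments lie in $A_2^i$, which is certainly nonvacuous on a general lattice (e.g.\ take $y=x$).

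The repair is short and parallels your first regime: when $y\in A_2^i$, both $(x,y)$ and $(x',y)$ belong to $\Delta_2^i$ (the latter via the factor $[c_{i-1},1[\,\times A_2^i$, since $x'\in[c_{i-1},c_i[\,$), whence
\[
T(x,y)=T_i(x\wedge c_i,\,y\wedge c_i)=T_i(x',\,y\wedge c_i)=T_i(x'\wedge c_i,\,y\wedge c_i)=T(x',y)\,.
\]
The paper avoids the omission by grouping $y\in A_2^i$ together with $y\in[c_{i-1},c_i]$ in a single case, since in both the pair lands in $\Delta_2^i$ with the same output $T_i(x\wedge c_i,y\wedge c_i)$. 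Apart from this missing case, your argument follows the paper's route; your exclusion bookkeeping for the $(x',y)$ side is in fact more explicit than what the paper spells out.
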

\noindent In fact, if $y\in A_{1}$, then
\[
T(x, y)=x\wedge y\wedge c=(x\wedge c_{i})\wedge y\wedge c=T(x\wedge c_{i}, y)\,.
\]
For $y\notin A_{1}$, we distinguish the following cases:\\
- If $y>c_{i}$, then
\[
T(x, y)= T_{i}(x\wedge c_{i}, y\wedge c_{i})=x\wedge c_{i}=x\wedge c_{i}\wedge y=T(x\wedge c_{i}, y)\,.
\]
- If $y\in [c_{i-1}, c_{i}]$ or $y\in A_{2}^i$, then
\[
T(x, y)=T_{i}(x\wedge c_{i}, y\wedge c_{i})=T(x\wedge c_{i}, y)\,.
\]
- If $y\in A_{2}^j$ for some $j<i$, then
\[
T(x, y)=T_{j}(x\wedge c_{j}, y\wedge c_{j})=T_{j}(x\wedge c_{i}\wedge c_{j}, y\wedge c_{j})=T(x\wedge c_{i}, y)\,.
\]
- If $y\in [c_{j-1}, c_{j}]$ for some $j<i$ or $y\leq c$, then
\[
T(x, y)=x\wedge y=x\wedge c_{i}\wedge y=T(x\wedge c_{i}, y)\,.
\]

Now we are ready to prove Theorem~\ref{thrm-1}.

\noindent {\bf Proof of Theorem~\ref{thrm-1}.}\\
Obviously, $T$ is commutative and $1$ is the neutral element of $T$.
We only need to show that $T$ is increasing and associative.

Increasingness: Let $x, y, z\in L$ with $y\leq z$. We need to prove the following inequality
\begin{equation}\label{monoto}
T(x, y)\leq T(x, z)\,.
\end{equation}

\noindent If $1\in\{x, y, z\}$, then~\eqref{monoto} trivially holds. In the following, we only consider $1\notin\{x, y, z\}$.

\emph{By Observation~\ref{obs1}, we can suppose that $x, y, z\notin A_{1}$}.\newline
In fact, if $x\in A_{1}$, then $x\wedge c\in [0, c]$ and~(\ref{monoto}) is equivalent to
\[
T(x\wedge c, y)\leq T(x\wedge c, z)\,.
\]
If $y\in A_{1}$, then $y\wedge c\in [0, c]$, $y\wedge c\leq z$ and~(\ref{monoto}) is equivalent to
\[
T(x, y\wedge c)\leq T(x, z)\,.
\]
If $z\in A_{1}$, then $z\wedge c\in [0, c]$. Note that $y\leq z$ implies either $y\in A_{1}$ or $y\leq c$.
In both cases, $T(x, y)=T(x, y\wedge c)$ and~(\ref{monoto}) is equivalent to
\[
T(x, y\wedge c)\leq T(x, z\wedge c)\,.
\]

\emph{By Observation~\ref{obs2}, we can also suppose that $x, y, z\notin A_{2}$}.\\
In fact, if $x\in A_{2}$, i.e., there exists $i\in \Z$ such that $x\in A_{2}^i$, then $x\wedge c_{i}\in [c_{i-1}, c_{i}]$
and~(\ref{monoto}) is equivalent to
\[
T(x\wedge c_{i}, y)\leq T(x\wedge c_{i}, z)\,.
\]
If $y\in A_{2}$, i.e., there exists $j\in \Z$ such that $y\in A_{2}^j$, then
$y\wedge c_{j}\in [c_{j-1}, c_{j}]$, $y\wedge c_{j}\leq z$ and~(\ref{monoto}) is equivalent to
\[
T(x, y\wedge c_{j})\leq T(x, z)\,.
\]
If $z\in A_{2}$, i.e., there exists $k\in \Z$ such that $z\in A_{2}^k$, then
$z\wedge c_{k}\in [c_{k-1}, c_{k}]$. Note that $y\leq z$, we distinguish the following cases:\newline
- If $y\in A_{2}^j$ for some $j\leq k$, then $y\wedge c_{j}\leq z\wedge c_{k}$ and~(\ref{monoto}) is equivalent to
\[
T(x, y\wedge c_{j})\leq T(x, z\wedge c_{k})\,.
\]
- If $y\in [c_{j-1}, c_{j}]$ for some $j\leq k$ or $y\in [0, c]$, then
$y=y\wedge c_{k}\leq z\wedge c_{k}$ and~(\ref{monoto}) is equivalent to
\[
T(x, y)\leq T(x, z\wedge c_{k})\,.
\]
- If $y\in A_{1}$, then $y\wedge c\leq z\wedge c_{k}$ and~(\ref{monoto}) is equivalent to
\[
T(x, y\wedge c)\leq T(x, z\wedge c_{k})\,.
\]

Based on the discussion above, it suffices to verify that~\eqref{monoto} holds for $x, y, z\in S_{2}=B_{1}\cup B_{2}\cup B_{3}$.
However, in that case the proof follows from Observation~\ref{obsonS2}.

Associativity: Let $x, y, z\in L$. We need to prove the following equality
\begin{equation}\label{asso}
T(T(x, y), z)=T(x, T(y, z))\,.
\end{equation}
If $1\in\{x, y, z\}$, then \eqref{asso} trivially holds. We only consider $1\notin\{x, y, z\}$.

In a similar way as in the case of the increasingness property, we can prove that it suffices to consider $x, y, z\in S_{2}$,
in which case the proof follows from Observation~\ref{obsonS2}.

To conclude, we have proved that $T$ is commutative, increasing, associative and has neutral element $1$, i.e., $T$ is a t-norm on $L$.~$\Box$

\begin{thrm}\label{mainth}
Let $(L, \leq, \wedge, \vee, 0, 1)$ be a complete lattice, $\left\{[a_{i}, b_{i}]\right\}_{i\in \Z}$ be a family of subintervals of $L$ with $b_{i}\leq a_{i+1}$, $a=\bigwedge\limits_{i\in \Z} a_{i}$ and $\{T_{i}\}_{i\in \Z}$ be a family of t-norms on these subintervals. Then the ordinal sum $T=\{\langle a_{i}, b_{i}, T_{i}\rangle\}_{i\in \Z}$ given by~\eqref{eq-os3}
is a t-norm on $L$.
\end{thrm}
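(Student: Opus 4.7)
The plan is to deduce Theorem \ref{mainth} from Theorem \ref{thrm-1} via the three-step interleaving construction already set up just before Definition \ref{dfnt-2}. Specifically, I form the auxiliary chain $\{c_i\}_{i\in \Z}$ by setting $c_{2i-1} = a_i$ and $c_{2i} = b_i$. The hypothesis $b_i \leq a_{i+1}$ makes this chain non-decreasing, and since the odd-indexed subsequence coincides with $\{a_i\}_{i\in \Z}$, one has $\bigwedge_{i\in \Z} c_i = \bigwedge_{i\in \Z} a_i = a$. I then endow each contiguous subinterval $[c_{i-1}, c_i]$ with the t-norm $\hat{T}_{2i} = T_i$ on $[a_i, b_i]$ or $\hat{T}_{2i+1} = T_\wedge$ on the filler $[b_i, a_{i+1}]$. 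Theorem \ref{thrm-1} then immediately yields that the ordinal sum \eqref{eq-os2} is a t-norm on $L$.

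What remains is the pointwise verification that \eqref{eq-os2} and \eqref{eq-os3} define the same operation. I would match the five cases of \eqref{eq-os3} against \eqref{eq-os2}: on each principal square $[a_i, b_i]^2 = [c_{2i-1}, c_{2i}]^2$ both formulas return $T_i(x, y)$; on each filler square $[b_i, a_{i+1}]^2 = [c_{2i}, c_{2i+1}]^2$, \eqref{eq-os2} returns $\hat{T}_{2i+1}(x, y) = x \wedge y$, which is absorbed by the \emph{otherwise} branch of \eqref{eq-os3}. Unfolding the definitions via $c_{2i} = b_i$ and $c_{2i-1} = a_i$ yields $\Delta_2^{2i} = \Lambda_3^i$ with common value $T_i(x \wedge b_i, y \wedge b_i)$ and $\Delta_2^{2i-1} = \Lambda_2^i$ with common value $T_\wedge(x \wedge a_i, y \wedge a_i) = x \wedge y \wedge a_i$.

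The main obstacle is the identity $\Delta_1 = \Lambda_1$, equivalently $A_1 = A'_1$. The inclusion $A'_1 \subseteq A_1$ is immediate because $a_i = c_{2i-1}$. For the reverse, given $x \in A_1$ witnessed by a sequence $i_k \to -\infty$ with $x \in I_{c_{i_k}}$, the odd-indexed $i_k$ directly supply witnesses $x \in I_{a_{j_k}}$ with $j_k \to -\infty$; if only finitely many $i_k$ are odd, I would argue that each witness $x \in I_{b_{n_k}}$ actually forces $x \in I_{a_{n_k}}$ as well. Indeed, $a_{n_k} \leq b_{n_k}$ rules out $x \leq a_{n_k}$, and if instead $x \geq a_{n_k}$ for some $k$, then by monotonicity of $\{a_j\}$ and the relation $b_n \leq a_{n+1}$ one would have $x \geq b_n$ for all $n < n_k$, contradicting $x \in I_{b_n}$ for infinitely many such $n$. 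Hence $x \in I_{a_{n_k}}$ for all $k$, placing $x$ in $A'_1$. Once this identity is secured, all remaining pairs fall under the common \emph{otherwise} clause $x \wedge y$, and commutativity, monotonicity, associativity, and neutrality of the operation in \eqref{eq-os3} are inherited from Theorem \ref{thrm-1}.
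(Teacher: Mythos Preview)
Your proposal is correct and follows exactly the paper's own route: reduce Theorem~\ref{mainth} to Theorem~\ref{thrm-1} via the interleaving $c_{2i-1}=a_i$, $c_{2i}=b_i$ with filler t-norms $T_\wedge$, and then identify \eqref{eq-os2} with \eqref{eq-os3}. The paper dismisses the latter identification as ``routine to check''; your explicit verification of $A_1=A'_1$ (the only non-immediate part) is a welcome elaboration and is argued correctly.
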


\begin{proof}
It follows from Theorem~\ref{thrm-1} and the fact that~\eqref{eq-os3} is actually deduced from~\eqref{eq-os1}.
\end{proof}

Theorem~\ref{mainth} also applies to a finite sequence of subintervals $\left\{[a_{i}, b_{i}]\right\}^{n}_{i=1}$ on a bounded lattice $L$
(\emph{in this case $L$ need not be complete}).
To this end, it suffices to let $a_{i}= b_{i}=a_{1}$ for any $i\in \Z$ with $i<1$
and $b_{i}=a_{i}=b_{n}$ for any $i\in \Z$ with $i>n$.

In the finite case, it holds that
\[\Lambda_1=\left( I_{a_1}\times [0, 1[\,\right)  \cup \left( [0, 1[\,\times I_{a_1}\right) \,.\]

\begin{thrm}
Let $(L, \leq, \wedge, \vee, 0, 1)$ be a bounded lattice, $\left\{[a_{i}, b_{i}]\right\}^{n}_{i=1}$ be a finite sequence of subintervals on $L$ with $b_{i}\leq a_{i+1}$ and $\{T_{i}\}^{n}_{i=1}$ be a finite sequence of t-norms on these subintervals. Then the ordinal sum $T=\{\langle a_{i}, b_{i}, T_{i}\rangle\}^{n}_{i=1}\colon L\times L\to L$ defined by
\begin{equation}\label{eq-osfin}
\hspace{-0.2cm}T(x, y) =\left \{
        \begin{array}{ll}
             T_{i}(x, y)                               &\quad \mbox{if $(x, y)\in [a_{i}, b_{i}]^{2}$}  \\[1mm]
             T_{i}(x\wedge b_{i}, y\wedge b_{i})       & \quad\mbox{if $(x, y)\in \Lambda_3^i$}\\
              x\wedge y\wedge a_{i}                    &\quad \mbox{if $(x, y)\in \Lambda_2^i$} \\ [1mm]
              x\wedge y                                &\quad \mbox{otherwise,}
        \end{array}
       \right.
\end{equation}
is a t-norm on $L$, where $\Lambda_2^1\triangleq \Lambda_1$.
\end{thrm}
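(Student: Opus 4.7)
The plan is to deduce this finite statement directly from Theorem~\ref{mainth} via the padding trick mentioned just before the theorem. Concretely, I extend the finite sequence $\{[a_{i}, b_{i}]\}_{i=1}^{n}$ to a bi-infinite family indexed by $\Z$ by setting $a_{i} = b_{i} = a_{1}$ for $i < 1$ and $a_{i} = b_{i} = b_{n}$ for $i > n$, and I assign the unique (trivial) t-norm on each degenerate singleton interval. The monotonicity condition $b_{i} \leq a_{i+1}$ is clearly preserved. Crucially, $a := \bigwedge_{i \in \Z} a_{i} = a_{1}$ exists in $L$ without any completeness assumption, because only finitely many $a_{i}$ differ from $a_{1}$; this is precisely why the finite case does not require $L$ to be complete, even though Theorem~\ref{mainth} as stated does.

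The substantive step is to verify that~\eqref{eq-os3} applied to this padded family coincides, as a binary operation on $L \times L$, with~\eqref{eq-osfin}. This reduces to three bookkeeping checks. First, for each $i \notin \{1, \ldots, n\}$ the padded interval is a singleton, so $]a_{i}, 1[\,\cap I_{b_{i}} = \emptyset$ (elements strictly above $a_{i} = b_{i}$ are comparable with $b_{i}$) and similarly $]b_{i-1}, 1[\,\cap I_{a_{i}} = \emptyset$; hence $\Lambda_{3}^{i} = \emptyset$ for $i \notin \{1, \ldots, n\}$ and $\Lambda_{2}^{i} = \emptyset$ for $i \leq 1$ and $i > n$. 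Second, the set $A'_{1}$ of~\eqref{eq-os3} collapses to $I_{a_{1}}$: the condition $\inf\{i \mid x \in I_{a_{i}}\} = -\infty$ is equivalent to $x \in I_{a_{i}}$ for all sufficiently negative $i$, but for all such $i$ we have $a_{i} = a_{1}$, so $A'_{1} = I_{a_{1}}$, which yields $\Lambda_{1} = (I_{a_{1}} \times [0, 1[\,) \cup ([0, 1[\,\times I_{a_{1}})$ as asserted in the paragraph preceding the theorem. Third, since $a = a_{1}$, the ``$x \wedge y \wedge a$'' branch of~\eqref{eq-os3} becomes ``$x \wedge y \wedge a_{1}$'', which is exactly the branch indexed by $\Lambda_{2}^{1}$ in~\eqref{eq-osfin} under the convention $\Lambda_{2}^{1} \triangleq \Lambda_{1}$.

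With these identifications, the extended ordinal sum of~\eqref{eq-os3} is term-by-term the operation of~\eqref{eq-osfin}, and Theorem~\ref{mainth} guarantees it is a t-norm on $L$. The only real obstacle is the second step of the bookkeeping: one must handle the boundary index $i = 1$ carefully, because there $b_{0} = a_{1}$ forces $\Lambda_{2}^{1}$ (in the bi-infinite sense) to be empty, while the genuine ``$x \wedge y \wedge a$'' contribution is precisely what gets absorbed into $\Lambda_{2}^{1}$ via the stated convention in the finite formulation. No new analytical argument beyond Theorem~\ref{mainth} is required.
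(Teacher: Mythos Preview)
Your proposal is correct and follows exactly the approach the paper sketches: the paper derives the finite case from Theorem~\ref{mainth} by the same padding trick (setting $a_{i}=b_{i}=a_{1}$ for $i<1$ and $a_{i}=b_{i}=b_{n}$ for $i>n$), noting that completeness is unnecessary since $\bigwedge_{i\in\Z}a_{i}=a_{1}$ exists trivially and that $\Lambda_{1}$ reduces to $(I_{a_{1}}\times[0,1[\,)\cup([0,1[\,\times I_{a_{1}})$. Your bookkeeping verification, in particular the observation that the bi-infinite $\Lambda_{2}^{1}$ is empty while its role is taken over by $\Lambda_{1}$ under the convention $\Lambda_{2}^{1}\triangleq\Lambda_{1}$, supplies the details the paper leaves implicit.
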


Setting $n=1$, we get the ordinal sum with one summand.

\begin{thrm}
Let $(L, \leq, \wedge, \vee, 0, 1)$ be a bounded lattice, $[a, b]$ be a subinterval of $L$ and $T_{1}$ be a t-norm on $[a, b]$. Then the ordinal sum $T=\{\langle a, b, T_{1}\rangle\}\colon L\times L\to L$ defined by
\begin{equation}\label{eq-osone}
\hspace{-0.2cm}T(x, y) =\left \{
        \begin{array}{ll}
             T_{1}(x, y)
                  & \quad\mbox{if $(x, y)\in [a, b]^{2}$}  \\[1mm]
             T_{1}(x\wedge b, y\wedge b)
                 &\quad \mbox{if $(x, y)\in \Lambda$}\\
              x\wedge y\wedge a
                  &\quad \mbox{if $(x, y)\in \left( I_{a}\times [0, 1[\,\right) \cup \left( [0, 1[\,\times I_{a}\right)$} \\ [1mm]
              x\wedge y
                  &\quad \mbox{otherwise,}
        \end{array}
       \right.
\end{equation}
is a t-norm on $L$, where
\[\Lambda=\Big(\left(\,]a, 1[\,\cap I_{b}\right)\times [a, 1[\,\Big) \cup \Big( [a, 1[\,\times \left(\,]a, 1[\,\cap I_{b}\right) \Big)\,.\]
\end{thrm}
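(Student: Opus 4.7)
The plan is to derive this statement as the $n = 1$ instance of the preceding finite-case theorem, so essentially no new work is needed beyond checking that the index sets match up. I would begin by invoking the convention introduced just before the finite-case theorem: set $a_1 = a$, $b_1 = b$, $T_1$ as given, and extend trivially by $a_i = b_i = a$ for $i < 1$ and $a_i = b_i = b$ for $i > 1$. Under this padding the ordinal sum $\{\langle a_i, b_i, T_i \rangle\}_{i=1}^1$ of equation~\eqref{eq-osfin} becomes, by definition, the operation in~\eqref{eq-osone}, so it only remains to identify the three regions.

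Next I would check the three regions one by one. For $\Lambda_3^1$ in the $n = 1$ case, the formula
\[
\Lambda_3^1 = \bigl((\,]a_1, 1[\,\cap I_{b_1})\times [a_1, 1[\,\bigr) \cup \bigl([a_1, 1[\,\times (\,]a_1, 1[\,\cap I_{b_1})\bigr)
\]
is literally the set $\Lambda$ in~\eqref{eq-osone} after substituting $a_1 = a$ and $b_1 = b$. For $\Lambda_2^1 = \Lambda_1$, I would unfold the definition of $A'_1$. With the padding above, $a_i = a$ for every $i \leq 1$, so the condition $x \in I_{a_i}$ for some $i$ with $\inf\{i : x \in I_{a_i}\} = -\infty$ holds precisely when $x \in I_a$; indeed, any such $x$ belongs to $I_{a_i}$ for all $i \leq 1$, so the infimum is $-\infty$, and conversely an $x \notin I_a$ fails to lie in $I_{a_i}$ for any $i \leq 1$, which rules out the $-\infty$ infimum. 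Hence $A'_1 = I_a$, and $\Lambda_1$ reduces exactly to $(I_a \times [0, 1[\,) \cup ([0, 1[\, \times I_a)$.

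With the region identifications in hand, the two expressions~\eqref{eq-osfin} and~\eqref{eq-osone} coincide pointwise, and the previous theorem (itself a consequence of Theorem~\ref{mainth} and the explicit remark that completeness of $L$ is unnecessary in the finite case) guarantees that the resulting operation is a t-norm on $L$. There is no genuine obstacle here; the only point requiring attention is the reduction $A'_1 = I_a$, which relies on the convention that all padding indices collapse to $a$, so I would state that verification explicitly to keep the logical chain airtight.
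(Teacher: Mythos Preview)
Your proposal is correct and follows exactly the paper's approach: the paper obtains this theorem simply by setting $n=1$ in the preceding finite-case theorem, and your identification of the regions $\Lambda_3^1 = \Lambda$ and $\Lambda_2^1 = \Lambda_1 = (I_a \times [0,1[\,) \cup ([0,1[\, \times I_a)$ is precisely what is required. Note that the paper already records the formula $\Lambda_1 = (I_{a_1} \times [0,1[\,) \cup ([0,1[\, \times I_{a_1})$ for the general finite case just before the finite-case theorem, so your explicit computation of $A'_1 = I_a$ is a re-derivation of that stated fact rather than additional content.
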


Setting $b=1$, \eqref{eq-osone} reduces to \eqref{eq-tu1}.

To conclude this section, we give an example to show that, in our definition of ordinal sum, the condition that the endpoints of the subintervals constitute a chain is indispensable since it assures the well-definedness of our ordinal sum.

\begin{exa}\label{ex3-3}
Consider the complete lattice $L$ with Hasse diagram shown in Figure~\ref{Hasse-31}.
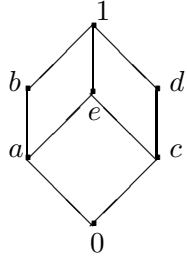
\begin{figure}[h]
\begin{center}
\begin{picture}(150,100)(100,-60)
\put(178,20){$1$}
\put(176,17){$\centerdot$}
\put(177,18){\line(-1,-1){25}}
\put(151,-7){$\centerdot$}
\put(145,-7){$b$}
\put(152,-7){\line(0,-1){25}}
\put(151,-33){$\centerdot$}
\put(145,-32){$a$}
\put(152,-33){\line(1,1){25}}
\put(152,-33){\line(1,-1){25}}
\put(177,18){\line(0,-1){25}}
\put(175.5,-8){$\centerdot$}
\put(175,-18){$e$}
\put(177,18){\line(1,-1){25}}
\put(200,-7){$\centerdot$}
\put(206,-7){$d$}
\put(201,-7){\line(0,-1){25}}
\put(200,-33){$\centerdot$}
\put(206,-32){$c$}
\put(201,-33){\line(-1,1){25}}
\put(201,-33){\line(-1,-1){25}}
\put(176,-58){$\centerdot$}
\put(176,-68){$0$}
\end{picture}
\end{center}
%\vspace{5cm}
\caption{Hasse diagram of the lattice $L$ in Example~\ref{ex3-3}.}
\label{Hasse-31}
\end{figure}
Let $T_{1}$ be a t-norm on $[a, b]$ and $T_{2}$ be a t-norm on $[c, d]$ (note that these t-norms are unique and coincide with $\land$).
Note that both $e\in\, ]a, 1[\,\cap I_{b}$ and $e\in\, ]c, 1[\,\cap I_{d}$. For the ordinal sum $T=\{\langle a, b, T_{1}\rangle, \langle c, d, T_{2}\rangle\}$
defined by \eqref{eq-osfin}, we have both $T(e, e)=T_{1}(e\wedge b, e\wedge b)=a$ and
$T(e, e)=T_{2}(e\wedge d, e\wedge d)=c$. Therefore, $T$ is not well defined.
\end{exa}

\section{Examples}\label{Sec-4}

In this section, we present two examples that fit in our proposed ordinal sum of t-norms on a bounded lattice.

\begin{exa}\label{ex4-1}
Consider the bounded lattice $L$ with Hasse diagram shown in Figure~\ref{Hasse-2}.
\begin{figure}[h]
\begin{center}
\begin{picture}(150,100)(100,-60)
\put(178,20){$1$}
\put(176,17){$\centerdot$}
\put(177.5,17){\line(-3,-2){140}}
\put(37,-77){\line(1,-1){114}}
\put(35,-77){$\centerdot$}
\put(30, -77){$k$}
\put(177.5,17){\line(-1,-1){25}}
\put(152,-8){$\centerdot$}
\put(158,-10){$j$}
\put(177.5,17){\line(1,-1){25}}
\put(200,-8){$\centerdot$}
\put(192,-8){$h$}
\put(177.5,17){\line(3,-2){40}}
\put(215,-10){$\centerdot$}
\put(220,-8){$i$}
\put(152,-7){\line(1,-1){25}}
\put(201,-7){\line(-1,-1){25}}
\put(175,-33){$\centerdot$}
\put(175,-25){$g$}
\put(176.5,-32){\line(0, -1){35}}
\put(216,-9){\line(-2,-3){39}}
\put(175,-68){$\centerdot$}
\put(165,-65){$f$}
\put(176.5,-68){\line(-1,-1){25}}
\put(200,-95){$\centerdot$}
\put(205,-95){$e$}
\put(176.5,-68){\line(1,-1){25}}
\put(150,-95){$\centerdot$}
\put(145,-95){$d$}
\put(151,-95){\line(1, -1){25}}
\put(200,-95){\line(-1, -1){50}}
\put(175,-120){$\centerdot$}
\put(180,-120){$c$}
\put(151,-95){\line(-1, -1){25}}
\put(125.5,-120){$\centerdot$}
\put(120,-120){$b$}
\put(126,-120){\line(1, -1){25}}
\put(150.5,-145){$\centerdot$}
\put(157,-147){$a$}
\put(151,-145){\line(0,-1){45}}
\put(150,-192){$\centerdot$}
\put(150,-205){$0$}

\end{picture}
\end{center}
\vspace{5cm}
\caption{Hasse diagram of the lattice $L$ in Example~\ref{ex4-1}.}
\label{Hasse-2}
\end{figure}
Consider the ordinal sum $T=\{\langle a, d, T_{c}\rangle, \langle f, h, T_{D}\rangle\}$ defined by \eqref{eq-osfin}. It is routine to check that $T$ (shown in Table~\ref{tab-b}) is a t-norm on $L$.
\begin{table*}
\caption{The ordinal sum $\{\langle a, d, T_{c}\rangle, \langle f, h, T_{D}\rangle\}$ in Example~\ref{ex4-1}. }
\centering
\begin{tabular}{c|c c c c c c c c c c c c c}
  \hline
  % after \\: \hline or \cline{col1-col2} \cline{col3-col4} ...
  $T$ & $0$ & $a$ & $b$ & $c$ & $d$ & $e$ & $f$ & $g$ & $h$ & $i$ & $j$ & $k$ & $1$\\ \hline
  $0$ & $0$ & $0$ & $0$ & $0$ & $0$ & $0$ & $0$ & $0$ & $0$ & $0$ & $0$ & $0$ & $0$\\
  $a$ & $0$ & $a$ & $a$ & $a$ & $a$ & $a$ & $a$ & $a$ & $a$ & $a$ & $a$ & $0$ & $a$\\
  $b$ & $0$ & $a$ & $a$ & $a$ & $b$ & $a$ & $b$ & $b$ & $b$ & $b$ & $b$ & $0$ & $b$\\
  $c$ & $0$ & $a$ & $a$ & $c$ & $c$ & $c$ & $c$ & $c$ & $c$ & $c$ & $c$ & $0$ & $c$\\
  $d$ & $0$ & $a$ & $b$ & $c$ & $d$ & $c$ & $d$ & $d$ & $d$ & $d$ & $d$ & $0$ & $d$\\
  $e$ & $0$ & $a$ & $a$ & $c$ & $c$ & $c$ & $e$ & $e$ & $e$ & $e$ & $e$ & $0$ & $e$ \\
  $f$ & $0$ & $a$ & $b$ & $c$ & $d$ & $e$ & $f$ & $f$ & $f$ & $f$ & $f$ & $0$ & $f$\\
  $g$ & $0$ & $a$ & $b$ & $c$ & $d$ & $e$ & $f$ & $f$ & $g$ & $f$ & $f$ & $0$ & $g$ \\
  $h$ & $0$ & $a$ & $b$ & $c$ & $d$ & $e$ & $f$ & $g$ & $h$ & $f$ & $g$ & $0$ & $h$ \\
  $i$ & $0$ & $a$ & $b$ & $c$ & $d$ & $e$ & $f$ & $f$ & $f$ & $f$ & $f$ & $0$ & $i$ \\
  $j$ & $0$ & $a$ & $b$ & $c$ & $d$ & $e$ & $f$ & $f$ & $g$ & $f$ & $f$ & $0$ & $j$\\
  $k$ & $0$ & $0$ & $0$ & $0$ & $0$ & $0$ & $0$ & $0$ & $0$ & $0$ & $0$ & $0$ & $k$\\
  $1$ & $0$ & $a$ & $b$ & $c$ & $d$ & $e$ & $f$ & $g$ & $h$ & $i$ & $j$ & $k$ & $1$\\
  \hline
\end{tabular}
\label{tab-b}
\end{table*}
\end{exa}

\begin{exa}
Consider the complete lattice $(L, \preceq, \sqcap, \sqcup, (0, 0), (1, 1))$ and the chain $\{c_{i}\}_{i\in \Z}$ introduced
in Example~\ref{ex-3-new}. For any $i\in \Z$, consider the t-norm $T_{i}=\hat{T}_{i}\times \hat{T}_{i}$ (see \cite{BaeMes99})
on $[c_{i-1}, c_{i}]$ given by
\[
T_{i}((x^{(1)}, x^{(2)}), (y^{(1)}, y^{(2)}))=\Big(\hat{T}_{i}(x^{(1)}, y^{(1)}), \hat{T}_{i}(x^{(2)}, y^{(2)})\Big)\,,
\]
where $\hat{T}_{i}$ is a t-norm on $[\dfrac{1}{3\pi}\arctan (i-1)+\dfrac{1}{2}, \dfrac{1}{3\pi}\arctan i+\dfrac{1}{2}]$.

The ordinal sum $T=\{\langle c_{i-1}, c_{i}, T_{i}\rangle\}_{i\in \Z}\colon L\times L\to L$ defined by \eqref{eq-os1} is given by
$T((x^{(1)}, x^{(2)}), (y^{(1)}, y^{(2)}))=(T^{(1)}(x^{(1)}, y^{(1)}), T^{(2)}(x^{(2)}, y^{(2)}))$, where
\[
T^{(1)}(x^{(1)}, y^{(1)}) =\left \{
        \begin {array}{ll}
            \hat{T}_{i}(x^{(1)}, y^{(1)})
                 &\quad\mbox{if $(x, y)\in [c_{i-1}, c_{i}]^{2}$}  \\
            \hat{T}_{i}(x^{(1)}\wedge c_i^{(1)}, y^{(1)}\wedge c_i^{(1)})
                 &\quad\mbox{if $(x, y)\in\Delta_{2}^i$}\\
            x^{(1)}\wedge y^{(1)}\wedge \dfrac{1}{3}
                 &\quad\mbox{if $(x, y)\in\Delta_{1}$}  \\
            x^{(1)}\wedge y^{(1)}
                 &\quad\mbox{otherwise}
        \end {array}
       \right.
\]
and
\[
T^{(2)}(x^{(2)}, y^{(2)}) =\left \{
        \begin {array}{ll}
           \hat{T}_{i}(x^{(2)}, y^{(2)})
                 &\quad\mbox{if $(x, y)\in [c_{i-1}, c_{i}]^{2}$}  \\
             \hat{T}_{i}(x^{(2)}\wedge c_i^{(2)}, y^{(2)}\wedge c_i^{(2)})
                 &\quad\mbox{if $(x, y)\in\Delta_{2}^i$}\\
            x^{(2)}\wedge y^{(2)}\wedge \dfrac{1}{3}
                 &\quad\mbox{if $(x, y)\in\Delta_{1}$}  \\
            x^{(2)}\wedge y^{(2)}
                 &\quad\mbox{otherwise.}
        \end {array}
       \right.
\]
\end{exa}

\section{Conclusion}\label{Sec-5}
In this paper, we have proposed an alternative definition of ordinal sum of countably many t-norms on subintervals of a complete lattice, where the endpoints of the subintervals constitute a chain. The completeness requirement for the lattice is not needed when considering finitely many t-norms. The newly proposed ordinal sum is shown to be always a t-norm. Obviously, our approach can be applied to define the ordinal sum
of triangular conorms on a bounded lattice in a dual way.

Note that we have only partially solved the ordinal sum problem. For future work, it is interesting to consider how to define the ordinal sum of t-norms on subintervals of a bounded lattice in case the endpoints of the subintervals do not constitute a chain (see Example~\ref{ex3-3}).

\section*{Acknowledgement}
This work has been supported in part by National Natural Science Foundation of China (No. 11571106 and No. 11571006), Natural Science Foundation of Zhejiang Province (No. LY20A010006) and NUPTSF (No. NY220029).
%%%%%%%%%%%%%%%%%%%%%%%%%%%%%%

%\end{linenumbers}

\begin{thebibliography}{99}
%%%
\bibitem{AlsFraSch06}
C. Alsina, M.J. Frank, B. Schweizer, Associative Functions:
Triangular Norms and Copulas, World Scientific Publishing Company,
Singapore, 2006.
%
\bibitem{Bir73}
G. Birkhoff, Lattice Theory, American Mathematical Society Colloquium Publications, Rhode Island, 1973.
%
\bibitem{BodKal14}
S. Bodjanova, M. Kalina, Construction of uninorms on bounded lattices, Proceedings of the 2014 IEEE
International Symposium on Intelligent Systems and Informatics, SISY 2014, Subotica, Serbia, pages 61--66.
%
\bibitem{Cli54}
A.H. Clifford, Naturally totally ordered commutative semigroups, Amer. J. Math. 76 (1954) 631-646.
%
\bibitem{BaeMes99}
B. De Baets, R. Mesiar, Triangular norms on product lattices, Fuzzy Sets Syst. 104 (1999) 61-75.
%
\bibitem{CooKer94}
G. De Cooman, E.E. Kerre, Order norms on bounded partially ordered sets, J. Fuzzy Math. 2 (1994) 281-310.
%
\bibitem{Dro99}
C.A. Drossos, Generalized t-norm structures, Fuzzy Sets Syst. 104 (1999) 53-59.
%
\bibitem{Elz19}
M. El-Zekey, Lattice-based sum of t-norms on bounded lattices, Fuzzy Sets Syst. (2019),
https://doi.org/10.1016/j.fss.2019.01.006.
%
\bibitem{ErtKarMes15}
\"{U}. Ertu\u{g}rul, F. Kara\c{c}al, R. Mesiar, Modified ordinal sums of triangular norms and triangular conorms on bounded
lattices, Int. J. Intell. Syst. 30 (2015) 807-817.
%
\bibitem{Fuchs63}
L. Fuchs, Partially Ordered Algebraic Systems, Pergamon Press, Oxford, 1963.
%
\bibitem{Hohle95} U. H\"{o}hle, Commutative, residuated l-monoids, in: U. H\"{o}hle, E.P. Klement (Eds.), Non-Classical Logics
and their Applications to Fuzzy Subsets: A Handbook of the Mathematical Foundations of Fuzzy Set Theory, Kluwer Academic Publishers,
Dordrecht, 1995, pp. 53-106.
%
\bibitem{JipMon09}
P. Jipsen, F. Montagna, The Blok-Ferreirim theorem for normal GBL-algebras and its application, Algebra Universalis 60 (2009) 381-404.
%
\bibitem{KleMesPap00}
E.P. Klement, R. Mesiar, E. Pap, Triangular Norms, Kluwer Academic Publishers, Dordrecht, 2000.
%
\bibitem{Lin65}
C.M. Ling, Representation of associative functions, Publ. Math. Debrecen 12 (1965) 189-212.
%
\bibitem{Med12}
J. Medina, Characterizing when an ordinal sum of t-norms is a t-norm on bounded lattices, Fuzzy Sets Syst. 202 (2012) 75-88.
%
\bibitem{Sam06}
S. Saminger, On ordinal sums of triangular norms on bounded lattices, Fuzzy Sets Syst. 157 (2006) 1403-1416.
%
\bibitem{SamKleMes08}
S. Saminger-Platz, E.P. Klement, R. Mesiar, On extensions of triangular norms on bounded lattices,
Indag. Mathem. 19 (2008) 135-150.
%
\bibitem{SchSkl63}
B. Schweizer, A. Sklar, Associative functions and abstract semigroups, Publ. Math. Debrecen 10 (1963) 69-81.
%
\bibitem{Zha05}
D. Zhang, Triangular norms on partially ordered sets, Fuzzy Sets Syst. 153 (2005) 195-209.
\end{thebibliography}
\end{document}